\newtheorem{thm}{Theorem}[section]
\newtheorem{cor}[thm]{Corollary}
\newtheorem{lem}[thm]{Lemma}
\newtheorem{prop}[thm]{Proposition}
\theoremstyle{definition}
\newtheorem*{theorem}{\textbf{Main Theorem}}
\newtheorem*{proof M}{\textbf{Proof of  Main Theorem}}
\theoremstyle{remark}
\numberwithin{equation}{section}
\begin{document}
\title[ the Schur's theorem]{On converse of the  Schur's theorem for nilpotent Lie superalgebras}%
\author[A. Shamsaki]{afsaneh shamsaki}%
\email{Shamsaki.afsaneh@yahoo.com}%
\address{School of Mathematics and Computer Science,
Damghan University, Damghan, Iran}
\author[P. Niroomand]{Peyman Niroomand}\thanks{Corresponding Author: Peyman Niroomand}
\email{niroomand@du.ac.ir, p$\_$niroomand@yahoo.com}
\address{School of Mathematics and Computer Science,
Damghan University, Damghan, Iran}
\author[E. Stitzinger]{ERNEST STITZINGER}
\email{stitz@ncsu.edu}
\address{Department of Mathematics, North Carolina State University, North Carolina, USA}

\keywords{Schur's theorem,  Nilpotent Lie superalgebra, }%
\subjclass{17B30, 17B05, 17B99}

\begin{abstract}

In this paper, we establish a converse to Schur's theorem for Lie superalgebras \( L \), focusing on cases where the minimal generator number pairs \((p \vert q)\) of \( L/Z(L) \) are considered, and where the superdimension \( \mathrm{sdim} L^{2} \) is finite. We introduce a new invariant \( st(L) \), which plays a key role in the classification of finite-dimensional nilpotent Lie superalgebras. Specifically, we classify the structure of all such Lie superalgebras \( L \) when \( st(L) \in \{(0,0), (1,0), (0,1), (2,0), (0,2), (1,1)\} \).

\end{abstract}
\maketitle
  \section{Introduction and preliminaries}
Schur's theorem, a cornerstone in group theory, asserts that if the quotient of a group \( G \) by its center \( Z(G) \), denoted \( G/Z(G) \), is finite, then the commutator subgroup \( G^2 \) must also be finite. This theorem, named after Issai Schur \cite{schur}, has deepened the understanding of the structure of groups by highlighting the connection between the group \( G \) and its commutator subgroup. This relationship is significant in both abstract group theory and the study of specific types of groups. However, the theorem is not biconditional; in other words, the finiteness of the commutator subgroup \( G^2 \) does not necessarily imply the finiteness of the quotient group \( G/Z(G) \). A notable counterexample is provided by infinite extra special \( p \)-groups for odd primes \( p \), in which \( G^2 \) is finite, but the quotient \( G/Z(G) \) is infinite.

Over the years, this one-sidedness has raised interest among group theorists and algebraists alike, leading to attempts at determining specific conditions under which a partial converse to Schur's theorem might hold. Such conditions would provide greater insight into the structural properties of certain classes of groups. One of the most influential results in this area was established by Neumann \cite{neu}, who formulated an inequality involving the generating number of a group \( G \). He demonstrated that if \( G \) is generated by \( d(G) \) elements, then the quotient group satisfies the inequality \( |G/Z(G)| \leq |G^2|^{d(G)} \). This result has profound implications, particularly because it reveals that if both \( G^2 \) and the generating number \( d(G) \) are finite, then \( G/Z(G) \) must also be finite. Thus, under these additional constraints, a partial converse to Schur’s theorem can be established.

Schur’s theorem is not confined to the realm of group theory alone. In fact, its principles have been extended to other algebraic structures, including Lie algebras. The analog of Schur’s theorem in Lie algebras has been the subject of extensive research, with important results appearing in several foundational works, such as \cite{Ar}. These results explore the connections between a Lie algebra and its derived algebra, revealing structural insights that parallel those found in the group-theoretic context. One key result from \cite{sh1} builds on work done for groups in \cite{gum} and establishes that for a Lie algebra \( L \), the dimension of the quotient \( L/Z(L) \) by its center is governed by an equation that relates it to the dimension of its derived algebra \( L^2 \). This equation takes the form:

\[
\dim L/Z(L) = d(L/Z(L)) \cdot \dim L^2 - t(L),
\]

where \( t(L) \geq 0 \) is an integer. This result provides a foundation for a possible converse of Schur’s theorem in the context of Lie algebras, where finiteness conditions on the derived algebra and the generating number \( d(L/Z(L)) \) lead to corresponding finiteness properties in the quotient \( L/Z(L) \).

Building on these developments, recent research has explored whether analogous results can be extended to Lie superalgebras, a generalization of Lie algebras that incorporates a \( \mathbb{Z}_2 \)-grading. Lie superalgebras arise naturally in various fields of mathematics and theoretical physics, particularly in the study of supersymmetry, and they introduce an additional layer of complexity to classical results from Lie algebra theory. As in the case of Lie algebras, one might ask whether Schur's theorem and its converse can be extended to Lie superalgebras, and under what conditions these extensions hold.

A Lie superalgebra is an algebraic structure that generalizes the notion of a Lie algebra by splitting the algebra into two components: an even part, denoted \( L_{\bar{0}} \), and an odd part, denoted \( L_{\bar{1}} \). If \( \dim L_{\bar{0}} = r \) and \( \dim L_{\bar{1}} = s \), then the superdimension of \( L \) is given by \( \mathrm{sdim} L = (r, s) \). The structure of Lie superalgebras presents new challenges when attempting to generalize results from Lie algebra theory, as the interaction between the even and odd components must be carefully considered. Nonetheless, many classical results can be successfully extended to this richer algebraic setting.

In this paper, we seek to extend the work of \cite{sh1} to the context of Lie superalgebras. Our goal is to establish a generalization of the converse of Schur’s theorem for Lie superalgebras, specifically in the setting where the superdimension of the derived algebra is finite. We consider a Lie superalgebra \( L \) with superdimension \( \mathrm{sdim} L^2 = (r \vert s) \), where \( r + s \) is finite, and we assume that the quotient \( L/Z(L) \) is a \( (p \vert q) \)-generated Lie superalgebra. Under these conditions, we prove the inequality:

\[
\mathrm{sdim} L/Z(L) \leq \uplambda(L^2, p, q),
\]

where \( \uplambda(L^2, p, q) \) is defined as:

\[
\uplambda(L^2, p, q) = \left(p \cdot \dim L^2_{\overline{0}} + q \cdot \dim L^2_{\overline{1}}, \, q \cdot \dim L^2_{\overline{0}} + p \cdot \dim L^2_{\overline{1}}\right).
\]

This inequality provides a generalization of the converse of Schur’s theorem to the setting of Lie superalgebras, mirroring the result found in \cite[Theorem 3.1]{Nay1} for Lie algebras. In proving this result, we further the understanding of the relationship between the structure of a Lie superalgebra and its commutator superalgebra, enriching both the theory of Lie superalgebras and their applications.

In addition to extending the converse of Schur’s theorem, we also undertake the task of classifying finite-dimensional nilpotent Lie superalgebras based on the values of a specific invariant denoted \( st(L) \). Specifically, we classify nilpotent Lie superalgebras for which \( st(L) \) takes values in the set \( \{(0,0), (1,0), (0,1), (2,0), (0,2), (1,1)\} \). This classification provides a comprehensive understanding of the possible algebraic structures that arise under these conditions, offering a deeper insight into the superdimension and derived structure of these superalgebras.

The main contributions of this paper are twofold: first, we provide a generalization of the converse of Schur’s theorem to Lie superalgebras, and second, we offer a classification of finite-dimensional nilpotent Lie superalgebras based on specific superdimension constraints. These results not only expand the scope of classical algebraic theorems but also open up new avenues for future research in the realm of Lie superalgebras and their applications to theoretical physics. We anticipate that the findings presented here will stimulate further study into the structural properties of superalgebras and their invariants, as well as their connections to related fields.

In preparation for the detailed proofs presented in the subsequent sections, we briefly review some key definitions and terminology that will be used throughout the paper. The upper central series of a Lie superalgebra \( L \) is defined by:

\[
\lbrace 0\rbrace =Z_0(L)\subseteq  Z_1(L) \subseteq \dots \subseteq Z_k(L) \subseteq \dots,
\]

where \( Z_1(L) = Z(L) \) and \( Z_i(L)/Z_{i-1}(L) = Z(L/Z_{i-1}(L)) \). If \( L \) is a nilpotent Lie superalgebra of class \( k \), then \( Z_k(L) = L \).

A Lie superalgebra is called a stem Lie superalgebra if its center \( Z(L) \) is contained within its derived superalgebra \( L^2 \). Throughout this paper, we use the classification of stem nilpotent Lie superalgebras from \cite{Al}, which details all such algebras with superdimension at most \( (k \vert l) \) where \( k + l \leq 5 \) and \( \mathrm{sdim} L^2 = (r \vert s) \), with \( r + s \geq 2 \), over a field of characteristic not equal to 2 or 3.

\begin{align*}
& (4\vert 0)_{2}: \quad [e_1, e_2]=e_3, [e_1, e_3]=e_4\cr
&(2\vert 2)_{1}:  \quad [f_1, f_1]=e_1, [f_2, f_2]=e_2\cr
&(2\vert 2)_{4}:  \quad [f_1, f_2]=e_1, [f_2, f_2]=e_2\cr
&(2\vert 2)_{6}:  \quad [e_2, f_2]=f_1, [f_2, f_2]=e_1\cr
&(1\vert 3)_{1}:  \quad [e_1, f_2]=f_1, [e_1, f_3]=f_2\cr
&(5\vert 0)_{3}:  \quad [e_1, e_2]=e_3, [e_1, e_3]=e_4, [e_1, e_4]=e_5, [e_2, e_3]=e_5
\cr
&(5\vert 0)_{4}:  \quad [e_1, e_2]=e_3, [e_1, e_3]=e_4, [e_1, e_4]=e_5
\cr
&(5\vert 0)_{5}:  \quad [e_1, e_2]=e_3, [e_1, e_4]=e_5, [e_2, e_3]=e_5
\cr
&(5\vert 0)_{6}:  \quad [e_1, e_2]=e_3, [e_1, e_3]=e_4, [e_2, e_3]=e_5
\cr
&(5\vert 0)_{8}:  \quad [e_1, e_2]=e_4, [e_1, e_3]=e_5
\cr
&(4\vert 1)_{4}:  \quad [e_1, e_2]=e_3, [f_1, f_1]=e_4
\cr
&(4\vert 1)_{6}:  \quad [e_1, e_2]=e_3, [e_1, e_3]=e_4, [f_1, f_1]=e_4
\cr
&(1\vert 4)_{7}:  \quad [e_1, f_2]=f_1, [e_1, f_3]=f_2, [e_1, f_4]=f_3
\cr
&(1\vert 4)_{8}:  \quad [e_1, f_2]=f_1,  [e_1, f_4]=f_3
\cr
&(3\vert 2)_{5}:  \quad [f_1, f_1]=e_2,  [f_1, f_2]=e_1, [f_2, f_2]=e_3
\cr
&(3\vert 2)_{11}:  \quad [e_1, e_2]=e_3,  [e_1, f_2]=f_1
\cr
&(3\vert 2)_{12}:  \quad [e_1, e_2]=e_3,  [e_1, f_2]=f_1, [f_2, f_2]=e_3
\cr
&(3\vert 2)_{13}:  \quad [e_1, e_2]=e_3,  [e_1, f_2]=f_1, [f_1, f_2]=e_3, [f_2, f_2]=2e_2
\cr
&(2\vert 3)_{5}:  \quad  [f_1, f_1]=e_1, [f_2, f_2]=e_2, [f_3, f_3]=e_1
\cr
&(2\vert 3)_{6}:  \quad  [f_1, f_1]=e_1, [f_2, f_2]=e_2, [f_3, f_3]=e_1+e_2
\cr
&(2\vert 3)_{8}:  \quad  [f_1, f_2]=e_1, [f_2, f_2]=2e_2, [f_2, f_3]=e_2
\cr
&(2\vert 3)_{9}:  \quad  [f_1, f_2]=e_1, [f_2, f_2]=2e_2, [f_3, f_3]=e_1
\cr
&(2\vert 3)_{10}:  \quad  [f_1, f_2]=e_1, [f_2, f_2]=2e_2, [f_3, f_3]=e_1+e_2
\cr
&(2\vert 3)_{11}:  \quad  [f_1, f_2]=e_1, [f_2, f_2]=2e_2, [f_2, f_3]=e_2, [f_3,f_3]=e_1
\cr
&(2\vert 3)_{13}:  \quad  [e_1, f_3]=f_1, [f_2, f_2]=e_2
\cr
&(2\vert 3)_{14}:  \quad  [e_1, f_3]=f_1, [f_2, f_3]=e_2
\cr
&(2\vert 3)_{16}:  \quad  [e_1, f_3]=f_1, [f_2, f_2]=e_2, [f_3, f_3]=e_2
\cr
&(2\vert 3)_{18}:  \quad  [e_1, f_3]=f_1, [e_2, f_2]=f_1, [f_2, f_2]=2e_1, [f_2, f_3]=-e_2
\cr
&(2\vert 3)_{19}:  \quad  [e_1, f_3]=f_1, [e_2, f_2]=f_1, [f_2, f_3]=-e_1, [f_3, f_3]=2e_2
\cr
&(2\vert 3)_{20}:  \quad  [e_1, f_3]=f_1, [e_2, f_3]=f_2
\cr
&(2\vert 3)_{22}:  \quad  [e_1, f_2]=f_1, [e_1, f_3]=f_2
\cr
&(2\vert 3)_{23}:  \quad  [e_1, f_2]=f_1, [e_1, f_3]=f_2,   [f_1,f_3]=-e_2, [f_2,f_2]=e_2
\cr
&(2\vert 3)_{24}:  \quad  [e_1, f_2]=f_1, [e_1, f_3]=f_2,   [e_2,f_3]=f_1
\end{align*}
In the table below, we present the superdimension $ \mathrm{sdim} L/Z(L) $, $ \mathrm{sd}( L/Z(L) ) $, and $ \mathrm{sdim} L^{2} $ for all stem nilpotent Lie superalgebras $ L $ with superdimension $ (k\vert l) $, where $ k + l \leq 5 $ and $ \mathrm{sdim} L^{2} = (r\vert s) $ with $ r + s \geq 2. $
\begin{longtable}{ccccc}
\multicolumn{3}{c}{\textbf{Table 1. } }\\
\hline \multicolumn{1}{c}{\textbf{Name}} & \multicolumn{1}{c}{$ \mathrm{sdim}L/Z(L) $}   & \multicolumn{1}{c}{$\mathrm{sd}( L/Z(L) )$} & \multicolumn{1}{c}{$ \mathrm{sdim} L^{2} $} \\
\hline
\endhead
\hline \multicolumn{2}{r}{\small \itshape continued on the next page}
\endfoot
\endlastfoot
$(4\vert 0)_{2}$  &$ (3,0) $& $ (2,0) $ & $ (2,0) $\\
\\
$(2\vert 2)_{1},$ $(2\vert 2)_{4}$ & $ (0,2) $& $ (0,2) $ & $ (2,0) $\\
\\
$(2\vert 2)_{6}$& $ (1,1) $& $ (1,1) $ & $ (1,1) $\\
\\
$(1\vert 3)_{1}$  & $ (1,2) $& $ (1,1) $ & $ (0,2) $\\
\\
$(5\vert 0)_{3},$ $(5\vert 0)_{4}$& $ (4,0) $& $ (2,0) $ & $ (3,0) $\\
\\
$(5\vert 0)_{5}$& $ (4,0) $& $ (3,0) $ & $ (2,0) $\\
\\
$(5\vert 0)_{6}$& $ (3,0) $& $ (2,0) $ & $ (3,0) $\\
\\
$(5\vert 0)_{8}$& $ (3,0) $& $ (3,0) $ & $ (2,0) $\\
\\
$(4\vert 1)_{4}$  & $ (2,1) $& $ (2,1) $ & $ (2,0) $\\
\\
$(4\vert 1)_{6}$ & $ (3,1) $& $ (2,1) $ & $ (2,0) $\\
\\ 
$(1\vert 4)_{7}$ & $ (1,3) $& $ (1,1) $ & $ (0,3) $\\
\\
$(1\vert 4)_{8}$ & $ (1,2) $& $ (1,2) $ & $ (0,2) $\\
\\
$(3\vert 2)_{5}$ & $ (0,2) $& $ (0,2) $ & $ (3,0) $\\
\\
$(3\vert 2)_{11},$ $(3\vert 2)_{12}$ & $ (2,1) $& $ (2,1) $ & $ (1,1) $\\
\\
 $(3\vert 2)_{13}$ & $ (2,2) $& $ (1,1) $ & $ (2,1) $\\
\\
$(2\vert 3)_{5},$ $(2\vert 3)_{6},$ $(2\vert 3)_{8},$  $(2\vert 3)_{9},$  $(2\vert 3)_{10},$  $(2\vert 3)_{11}$ & $ (0,3) $& $ (0,3) $ & $ (2,0) $\\
\\
$(2\vert 3)_{13},$  $(2\vert 3)_{14},$   $(2\vert 3)_{16},$  & $ (1,2) $& $ (1,2) $ & $ (1,1) $\\
\\
$(2\vert 3)_{18},$ $(2\vert 3)_{19}$ & $ (2,2) $& $ (0,2) $ & $ (2,1) $\\
\\
$(2\vert 3)_{20}$ & $ (1,2) $& $ (1,1) $ & $ (1,2) $\\
\\
$(2\vert 3)_{22}$ & $ (2,2) $& $ (2,1) $ & $ (0,2) $\\
\\
$(2\vert 3)_{23}$ & $ (1,3) $& $ (1,1) $ & $ (1,2) $\\
\\
$(2\vert 3)_{24}$ & $ (2,2) $& $ (2,1) $ & $ (0,2) $\\
\hline
 \label{ta1}
\end{longtable}
The following lemmas and theorems will be useful in the next section.

\begin{thm}\cite[Theorem 4.2]{Nay1}\label{d1}
Every Heisenberg Lie superalgebra with an even center, having superdimension \( (2m+1 \vert n) \), is isomorphic to \( H(m, n) = H_{\overline{0}} \oplus H_{\overline{1}} \), where 
\begin{equation*}
H_{\overline{0}} = \langle x_1, \dots, x_m, x_{m+1}, \dots, x_{2m}, z \ \vert \ [x_i, x_{m+i}] = z, \ i = 1, \dots, m \rangle,
\end{equation*}
and 
\begin{equation*}
H_{\overline{1}} = \langle y_1, \dots, y_n \ \vert \ [y_j, y_j] = z, \ j = 1, \dots, n \rangle.
\end{equation*}
\end{thm}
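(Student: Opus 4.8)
The plan is to reduce the classification entirely to the linear algebra of the bracket form induced on $L/Z(L)$. Recall that a Heisenberg Lie superalgebra is one with $L^{2}=Z(L)$ of superdimension $(1\vert 0)$; write $Z(L)=\langle z\rangle$ with $z$ even. Since $z$ is central, the bracket kills $z$ in each slot, so it descends to a well-defined bilinear map on $V:=L/Z(L)$ taking values in the line $\langle z\rangle\cong\Cpx$. Identifying that line with the scalars, I obtain a bilinear form $\beta\colon V\times V\to\Cpx$ determined by $[\bar a,\bar b]=\beta(a,b)\,z$. The $\mathbb Z_2$-grading of $L$ passes to $V=V_{\bar 0}\oplus V_{\bar 1}$, and the entire problem becomes the classification of $\beta$ up to change of graded basis.

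First I would record the symmetry constraints forced by the super-antisymmetry $[a,b]=-(-1)^{|a|\,|b|}[b,a]$ together with the fact that the image line $\langle z\rangle$ is purely even. Since $[L_{\bar 0},L_{\bar 1}]\subseteq L_{\bar 1}$ while $L^{2}\subseteq L_{\bar 0}$, the intersection $L^{2}\cap L_{\bar 1}$ is zero, so the mixed part vanishes: $\beta|_{V_{\bar 0}\times V_{\bar 1}}=0$. On the even part super-antisymmetry reads $[a,b]=-[b,a]$, so $\beta|_{V_{\bar 0}}$ is alternating; on the odd part it reads $[a,b]=[b,a]$, so $\beta|_{V_{\bar 1}}$ is symmetric. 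Next I would use $Z(L)=\langle z\rangle$ to see that $\beta$ is nondegenerate: an element $\bar a\in V$ lies in the radical of $\beta$ exactly when $[a,L]=0$, i.e. when $a\in Z(L)$, i.e. when $\bar a=0$. Because the Gram matrix of $\beta$ is block diagonal with respect to the grading, nondegeneracy on $V$ forces $\beta|_{V_{\bar 0}}$ and $\beta|_{V_{\bar 1}}$ to be separately nondegenerate.

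Then I would invoke the normal-form theorems for the two blocks. A nondegenerate alternating form exists only in even dimension, so $\dim V_{\bar 0}=2m$, and a symplectic basis $x_1,\dots,x_m,x_{m+1},\dots,x_{2m}$ yields $\beta(x_i,x_{m+i})=1$ with all remaining pairings zero, that is, $[x_i,x_{m+i}]=z$. For the symmetric block, over $\Cpx$ a nondegenerate symmetric form admits an orthonormal basis $y_1,\dots,y_n$ with $\beta(y_j,y_j)=1$ and $\beta(y_j,y_k)=0$ for $j\neq k$, that is, $[y_j,y_j]=z$. Lifting these bases to $L$ and adjoining $z$ reproduces precisely the presentation of $H(m,n)$, and $\mathrm{sdim}L=(2m+1\vert n)$ since $z$ contributes the extra even dimension. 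I would also note that no further relations arise from the Jacobi identity: as $L^{2}=Z(L)$ is central, every triple bracket vanishes, so any such $\beta$ genuinely defines a Lie superalgebra and the isomorphism type is determined solely by the form.

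The main obstacle is the passage to orthonormal form for the odd block. Unlike the symplectic normal form, which is available in any characteristic, diagonalizing a symmetric form to the identity matrix requires both $\mathrm{char}\neq 2$ and the existence of square roots of scalars, via the rescaling $y_j\mapsto y_j/\sqrt{\beta(y_j,y_j)}$; this is exactly where the hypothesis on the ground field enters, and over a field lacking such square roots distinct Heisenberg superalgebras (distinguished by the discriminant of $\beta|_{V_{\bar 1}}$) could appear, so the uniqueness of $H(m,n)$ is genuinely a feature of working over $\Cpx$.
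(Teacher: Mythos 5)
The paper offers no proof of this statement: it is quoted verbatim from Nayak's work (\cite[Theorem 4.2]{Nay1}) as background, so there is no internal argument to compare yours against. Judged on its own, your proof is correct and is the standard one: reducing to the induced bracket form $\beta$ on $V=L/Z(L)$, observing that the mixed block vanishes because $[L_{\overline{0}},L_{\overline{1}}]\subseteq L_{\overline{1}}$ cannot meet the purely even line $L^{2}=\langle z\rangle$, that super-antisymmetry makes $\beta$ alternating on $V_{\overline{0}}$ and symmetric on $V_{\overline{1}}$, that $Z(L)=\langle z\rangle$ forces nondegeneracy blockwise, and that the symplectic and orthonormal normal forms then produce exactly the presentation of $H(m,n)$; the remark that the super Jacobi identity is vacuous because $L^{2}$ is central correctly closes the loop in both directions. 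Your closing caveat is also well placed: the orthonormal form for the odd block needs square roots in the ground field, so the uniqueness assertion implicitly assumes an algebraically (or at least quadratically) closed field of characteristic not $2$, a hypothesis the paper never states for this theorem but which is in force in the cited source. The only cosmetic point is that you should state up front that ``Heisenberg'' here means $L^{2}=Z(L)$ with $\mathrm{sdim}\,Z(L)=(1\vert 0)$, since the one-dimensionality of the center is what makes $\beta$ scalar-valued rather than vector-valued; you use this but only introduce it parenthetically.
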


\begin{thm}\cite[Page 4]{Nay2}\label{d11}
Every Heisenberg Lie superalgebra with an odd center, having superdimension \( (m \vert m+1) \), is isomorphic to \( H_m = H_{\overline{0}} \oplus H_{\overline{1}} \), where 
\begin{equation*}
H_m = \langle x_1, \dots, x_m, y_1, \dots, y_m, z \ \vert \ [x_j, y_j] = z, \ j = 1, \dots, m \rangle.
\end{equation*}
\end{thm}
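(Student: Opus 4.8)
The plan is to treat this as a classification of Lie superalgebras whose derived superalgebra coincides with a one-dimensional odd center, and to read off the stated presentation from a nondegenerate bilinear pairing. First I would record the defining data: by definition of a Heisenberg Lie superalgebra we have $L^{2}=Z(L)$ one-dimensional, and the hypothesis that the center is odd means $Z(L)=\langle z\rangle$ with $z\in L_{\overline{1}}$, so that every bracket of $L$ lands in $\langle z\rangle$. Because the bracket respects the $\mathbb{Z}_{2}$-grading, $[L_{\overline{0}},L_{\overline{0}}]\subseteq L_{\overline{0}}\cap\langle z\rangle=0$ and $[L_{\overline{1}},L_{\overline{1}}]\subseteq L_{\overline{0}}\cap\langle z\rangle=0$, so the only possibly nonzero products are of the mixed type $[L_{\overline{0}},L_{\overline{1}}]\subseteq\langle z\rangle$.

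Next I would encode these mixed products as a bilinear map. Define $\beta\colon L_{\overline{0}}\times L_{\overline{1}}\to k$ by $[a,u]=\beta(a,u)\,z$ for $a\in L_{\overline{0}}$ and $u\in L_{\overline{1}}$; super-antisymmetry gives $[u,a]=-\beta(a,u)\,z$, so $\beta$ carries the entire multiplicative structure of $L$. Centrality of $z$ forces $\beta(a,z)=0$ for all $a$, so $z$ lies in the right radical of $\beta$. Conversely, an even element $a$ is central exactly when $\beta(a,\cdot)=0$, and an odd element $u$ is central exactly when $\beta(\cdot,u)=0$; since $Z(L)=\langle z\rangle$, the left radical of $\beta$ must be $0$ and its right radical must be precisely $\langle z\rangle$. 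Hence $\beta$ descends to a nondegenerate pairing $\overline{\beta}\colon L_{\overline{0}}\times\big(L_{\overline{1}}/\langle z\rangle\big)\to k$.

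A nondegenerate pairing between two finite-dimensional spaces forces them to have equal dimension, so $\mathrm{dim}\,L_{\overline{0}}=\mathrm{dim}\,L_{\overline{1}}-1=:m$, which yields $\mathrm{sdim}\,L=(m\vert m+1)$ as claimed. Finally I would choose dual bases: pick $x_{1},\dots,x_{m}$ in $L_{\overline{0}}$ and lift a $\overline{\beta}$-dual basis of $L_{\overline{1}}/\langle z\rangle$ to $y_{1},\dots,y_{m}\in L_{\overline{1}}$ so that $\overline{\beta}(x_{i},\overline{y}_{j})=\delta_{ij}$, i.e.\ $[x_{i},y_{j}]=\delta_{ij}\,z$; together with $z$ these vectors form a basis realizing exactly the presentation of $H_{m}$. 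The Jacobi identity needs no checking, since every bracket already lies in the center and the remaining relations are vacuous. The main technical point will be the nondegeneracy step: one must use the hypothesis $Z(L)=\langle z\rangle$ carefully to pin down the left and right radicals of $\beta$, and then carry out the change of basis (tracking super-signs) that simultaneously normalizes $\beta$ to the identity pairing.
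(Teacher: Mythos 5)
Your proposal is correct. Note, however, that the paper itself gives no proof of this statement: it is imported verbatim as a citation to Nayak's classification (reference \cite{Nay2}), so there is no internal argument to compare yours against. Your derivation is the standard one and is sound at every step: the grading forces $[L_{\overline{0}},L_{\overline{0}}]$ and $[L_{\overline{1}},L_{\overline{1}}]$ into $L_{\overline{0}}\cap\langle z\rangle=0$, the mixed bracket is encoded by a pairing $\beta$ whose left radical vanishes and whose right radical is exactly $\langle z\rangle$ because $Z(L)=\langle z\rangle$ is purely odd, nondegeneracy of the induced pairing on $L_{\overline{0}}\times(L_{\overline{1}}/\langle z\rangle)$ gives $\mathrm{dim}\,L_{\overline{0}}=\mathrm{dim}\,L_{\overline{1}}-1$, and a dual-basis choice normalizes $\beta$ to $[x_i,y_j]=\delta_{ij}z$. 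The observation that Jacobi imposes no constraint (all double brackets vanish) is exactly why the pairing carries the whole structure, so uniqueness up to isomorphism follows from the uniqueness of a nondegenerate pairing up to change of basis. The only cosmetic remark is that you could note $m\geq 1$ is forced (otherwise $L^{2}=0\neq Z(L)$), but this does not affect the argument.
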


We also recall the concept of a capable Lie superalgebra, which will be applied in the next section. A Lie superalgebra \( L \) is called capable if \( L \cong H/Z(H) \) for some Lie superalgebra \( H \).

\begin{thm}\cite[Theorem 5.9]{Rud}\label{l0}
Let \( L \) be a finite-superdimensional Lie superalgebra whose derived subsuperalgebra has dimension one. Then \( L \) is capable if and only if \( L \cong H(1,0) \oplus A(k-3 \vert l) \) or \( L \cong H_1 \oplus A(k-1 \vert l-2) \).
\end{thm}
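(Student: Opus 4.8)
The plan is to reduce the classification to a computation for Heisenberg superalgebras and then to control capability under forming a direct sum with an abelian summand. First I would invoke the standard capability criterion: a finite-superdimensional Lie superalgebra $L$ is capable if and only if its epicenter $Z^{*}(L)$ vanishes, where $Z^{*}(L)$ is the smallest central ideal $N$ with $L/N$ capable. Since $\dim L^{2}=1$ and $L$ is nilpotent, $L^{2}=\langle z\rangle$ with $z\in Z(L)$, and the induced bracket form on $L/Z(L)$ with values in the one-dimensional space $L^{2}$ is nondegenerate (its radical is $Z(L)/Z(L)=0$). Hence $L\cong H\oplus A$ with $A$ abelian and $H$ a Heisenberg Lie superalgebra; by Theorems \ref{d1} and \ref{d11}, $H$ is $H(m,n)$ when $\mathrm{sdim}L^{2}=(1,0)$ (even center) and $H_{m}$ when $\mathrm{sdim}L^{2}=(0,1)$ (odd center). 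This isolates the two families in the statement and turns the theorem into the assertion that $H\oplus A$ is capable if and only if $H\cong H(1,0)$ or $H\cong H_{1}$.

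For the \emph{if} direction I would exhibit explicit covers. The algebra $(4\vert0)_{2}$ from the list has $Z=\langle e_{4}\rangle$ and quotient $\langle e_{1},e_{2},e_{3}\rangle\cong H(1,0)$, so $H(1,0)$ is capable; likewise $(1\vert3)_{1}$ has $Z=\langle f_{1}\rangle$ and quotient $\langle e_{1},f_{3},f_{2}\rangle\cong H_{1}$ (identifying $x_{1}=e_{1}$, $y_{1}=f_{3}$, $z=f_{2}$), so $H_{1}$ is capable. To incorporate the abelian summand $A$, I would use that for a \emph{nonabelian} capable $H$ one always has $Z^{*}(H\oplus A)=0$: choosing $u,v\in H$ with $[u,v]\neq0$ and a cover $K$ with $K/Z(K)\cong H$, I build $W=K\oplus A\oplus\langle c_{1},\dots,c_{d}\rangle$ with fresh central generators $c_{i}$ and relations $[\tilde u,a_{i}]=c_{i}$, so that $Z(W)=Z(K)\oplus\langle c_{i}\rangle$ and $W/Z(W)\cong H\oplus A$. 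This shows $H(1,0)\oplus A(k-3\vert l)$ and $H_{1}\oplus A(k-1\vert l-2)$ are central quotients, hence capable.

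For the \emph{only if} direction I would show $Z^{*}(H)\neq0$ for every Heisenberg superalgebra other than $H(1,0)$ and $H_{1}$, and transfer this to $H\oplus A$ by monotonicity of the epicenter under direct factors, giving $Z^{*}(H\oplus A)\supseteq Z^{*}(H)\neq0$ and hence non-capability. Concretely I would compute the Schur multiplier $\mathcal{M}(H)$ together with its commutator/obstruction map for $H(m,n)$ and $H_{m}$, and read off that the central line $\langle z\rangle$ lies in $Z^{*}(H)$ as soon as the algebra is strictly larger than the minimal one: for $H(m,n)$ with $(m,n)\neq(1,0)$ (in particular the even-center case $H(0,1)$, which has the same derived superdimension as $H(1,0)$ yet must be excluded) and for $H_{m}$ with $m\geq2$. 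This parallels the classical fact that the $(2m+1)$-dimensional Heisenberg Lie algebra is capable exactly when $m=1$.

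The hard part will be the explicit multiplier/epicenter computation for the Heisenberg superalgebras, since the super signs change the symmetry type of the defining form: $H_{m}$ uses a symplectic pairing of mixed-parity generators, whereas $H(m,n)$ combines an alternating even pairing with the symmetric squares $[y_{j},y_{j}]=z$ of the odd generators, so the exterior square and its commutator map must be analyzed case by case. In particular, distinguishing $H(1,0)$ from $H(0,1)$ — and more generally seeing why every odd generator destroys capability in the even-center case — requires tracking these parity contributions to $\mathcal{M}(H)$ carefully. The secondary technical point is the precise justification of $Z^{*}(H\oplus A)=Z^{*}(H)$ for nonabelian $H$, which I would obtain from the cover construction above in the capable case and from epicenter monotonicity under the projection $H\oplus A\to H$ in the non-capable case.
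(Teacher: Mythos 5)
First, a point of comparison: the paper does not prove this statement at all --- it is quoted verbatim as \cite[Theorem 5.9]{Rud}, so there is no in-paper argument to measure yours against. Judged on its own terms, your skeleton (reduce a superalgebra with one-dimensional derived subsuperalgebra to $H\bigoplus A$ with $H$ Heisenberg and $A$ abelian; exhibit $(4\vert 0)_{2}$ and $(1\vert 3)_{1}$ as witnesses that $H(1,0)$ and $H_{1}$ are capable; handle the abelian summand in the positive direction by a twisted central extension $W$ with $[\tilde u,a_{i}]=c_{i}$) is sound and is essentially the route taken in the literature this theorem comes from. Your identification of the central quotients of $(4\vert 0)_{2}$ and $(1\vert 3)_{1}$ is correct, and your flag that $H(0,1)$ must be excluded despite having the same derived superdimension is exactly the right thing to worry about.

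The genuine gap is in the \emph{only if} direction. You claim $Z^{*}(H\bigoplus A)\supseteq Z^{*}(H)$ by ``monotonicity of the epicenter under direct factors,'' justified via the projection $H\bigoplus A\to H$. The projection argument gives the \emph{opposite} inclusion: an epimorphism $\varphi$ satisfies $\varphi(Z^{*}(L))\subseteq Z^{*}(\varphi(L))$, hence $Z^{*}(L_{1}\bigoplus L_{2})\subseteq Z^{*}(L_{1})\bigoplus Z^{*}(L_{2})$, and the reverse inclusion is false in general --- a one-dimensional abelian (super)algebra is not capable while a two-dimensional one is, the Lie-theoretic analogue of $\mathbb{Z}_{p}$ versus $\mathbb{Z}_{p}\times\mathbb{Z}_{p}$ for groups. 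So non-capability of $H(m,n)$ or $H_{m}$ alone does not transfer to $H(m,n)\bigoplus A$ or $H_{m}\bigoplus A$ by any formal direct-sum principle; you must run the multiplier/epicenter computation for the whole algebra including the abelian summand (e.g., show that the natural map $\mathcal{M}(H\bigoplus A)\to\mathcal{M}((H\bigoplus A)/\langle z\rangle)$ is injective for $H\neq H(1,0),H_{1}$, with $A$ carried through a K\"unneth-type formula). Since you also defer the core multiplier computations for $H(m,n)$ and $H_{m}$ themselves, what you have is a correct plan for the reduction and the positive direction, but the negative direction as written rests on a false lemma and would not close without reworking that step.
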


\begin{prop}\cite[Proposition 3.2]{Rud1}\label{prop0}
Let \( L \) be a \( (p \vert q) \)-generator nilpotent Lie superalgebra with superdimension \( (m \vert n) \) and \( \mathrm{sdim} \ L^2 = (r \vert s) \). Then \( \mathrm{sdim} \ L / L^2 = (p \vert q) \).
\end{prop}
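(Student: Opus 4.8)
The plan is to reduce the whole statement to the abelianness of $ L/L^{2} $ together with the nilpotency of $ L $, which makes $ L^{2} $ behave like a Frattini ideal. First I would record the graded behaviour of the bracket: since $ [L_{\overline{0}}, L_{\overline{0}}] + [L_{\overline{1}}, L_{\overline{1}}] \subseteq L_{\overline{0}} $ and $ [L_{\overline{0}}, L_{\overline{1}}] \subseteq L_{\overline{1}} $, the derived superideal $ L^{2} $ is homogeneous and the quotient $ L/L^{2} $ is an abelian Lie superalgebra. In an abelian superalgebra every bracket vanishes, so a homogeneous subset generates the whole superalgebra precisely when it spans it; in particular a minimal homogeneous generating set of $ L/L^{2} $ is nothing but a homogeneous basis, with exactly $ \mathrm{dim}\,(L/L^{2})_{\overline{0}} $ even and $ \mathrm{dim}\,(L/L^{2})_{\overline{1}} $ odd members. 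Note also that $ \mathrm{sdim}\,L/L^{2}=(m-r\vert n-s) $ is forced automatically by the grading, so the real content is the identification of this pair with the minimal generator pair $ (p\vert q) $.

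For the lower bound I would take any generating pair consisting of $ p $ even elements $ x_1,\dots,x_p $ and $ q $ odd elements $ y_1,\dots,y_q $ and push it forward along the canonical projection $ L \to L/L^{2} $. Because $ L/L^{2} $ is abelian, the subsuperalgebra generated by the images is merely their linear span, so the even images must span $ (L/L^{2})_{\overline{0}} $ and the odd images must span $ (L/L^{2})_{\overline{1}} $. This already forces $ p \geq \mathrm{dim}\,(L/L^{2})_{\overline{0}} $ and $ q \geq \mathrm{dim}\,(L/L^{2})_{\overline{1}} $. The two inequalities are independent, so there is no possibility of trading even generators for odd ones: although in $ L $ a bracket $ [y_i,y_j] $ of odd generators is even and may well contribute to generation, its image in $ L/L^{2} $ is zero, so no odd generator can contribute to the even part of the abelianisation.

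For the matching upper bound I would lift a homogeneous basis of $ L/L^{2} $ to homogeneous elements $ z_1,\dots,z_k $ of $ L $ and let $ M $ be the subsuperalgebra they generate. By construction $ M + L^{2} = L $. The nilpotency of $ L $ then upgrades this to $ M = L $ by the standard Frattini iteration: from $ L = M + L^{2} $ one gets $ L^{2} = [L,L] \subseteq M + [L, L^{2}] = M + L^{3} $, whence $ L = M + L^{3} $, and inductively $ L = M + L^{c+1} $ for every $ c $; taking $ c $ past the nilpotency class gives $ L = M $. Since these generators are exactly $ \mathrm{dim}\,(L/L^{2})_{\overline{0}} $ even and $ \mathrm{dim}\,(L/L^{2})_{\overline{1}} $ odd elements, the minimum is attained, and combining with the lower bound yields $ (p\vert q) = \mathrm{sdim}\,L/L^{2} $, as claimed.

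The only genuine point of care, and the main obstacle, is the Frattini iteration in the graded setting: one must check that $ M $ is a bona fide homogeneous subsuperalgebra, which is immediate since it is generated by homogeneous elements, and that all the containments in the iteration respect the $ \mathbb{Z}_{2} $-grading, which they do because each lower central term $ L^{c} $ is a homogeneous superideal. Everything else is linear-algebraic bookkeeping of parities.
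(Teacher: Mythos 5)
Your argument is correct: the reduction to the abelian quotient $L/L^{2}$, the parity-separated lower bound, and the Frattini-type iteration $L=M+L^{c+1}$ (valid because $L^{2}$ and all lower central terms are homogeneous superideals and $L$ is nilpotent of finite superdimension) together give exactly $(p\vert q)=\mathrm{sdim}\,L/L^{2}$. Note that the paper itself supplies no proof of this proposition --- it is imported verbatim from the cited reference --- so there is nothing internal to compare against; your write-up is the standard Burnside-basis-type argument one would expect that reference to contain, and it is complete as stated.
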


A linear map \( D : L \to L \) of parity \( \alpha \in \mathbb{Z}_2 \) is called a superderivation of \( L \) if, for all \( x, y \in L \),
\begin{equation*}
D[x, y] = [D(x), y] + (-1)^{\alpha \vert x \vert} [x, D(y)].
\end{equation*}
The set of all superderivations of parity \( \alpha \) is denoted \( Der_{\alpha}(L) \), where \( \alpha \in \mathbb{Z}_2 \). The superspace \( Der(L) = Der_{\overline{0}}(L) \oplus Der_{\overline{1}}(L) \) forms a Lie superalgebra with the usual bracket 
\begin{equation*}
[D, E] = DE - (-1)^{\vert D \vert \vert E \vert} ED,
\end{equation*}
where \( D, E \in Der(L) \). The elements of \( Der(L) \) are called superderivations of \( L \). For \( x \in L \), the map \( ad_x : L \to L \) given by \( y \mapsto [x, y] \) is a superderivation of \( L \), referred to as an inner derivation. The set of inner derivations, denoted \( ad(L) \), is a superideal of \( Der(L) \).

A superderivation of a Lie superalgebra \( L \) is called an \( ID \)-superderivation if it maps \( L \) to its derived subsuperalgebra. The set of all \( ID \)-superderivations of \( L \), denoted \( ID(L) \), is defined as \( ID(L) = \{ \alpha \in Der(L) \ \vert \ \alpha(L) \subseteq L^2 \} \), and \( ID^*(L) \) is defined as \( ID^*(L) = \{ \alpha \in ID(L) \ \vert \ \alpha(Z(L)) = 0 \} \). Clearly, \( ad(L) \leq ID^*(L) \leq ID(L) \).

\begin{thm}\cite[Theorem 2.2]{Liu}\label{th1.5}
Suppose \( L \) is a Lie superalgebra such that \( \mathrm{sdim} \ L^2 = (r \vert s) \), with \( r + s \leq \infty \), and \( L / Z(L) \) is finitely generated. Then
\begin{equation*}
\mathrm{sdim} \ ID^* \leq \lambda(L^2, p, q),
\end{equation*}
where \( (p \vert q) \) is the minimal generator pair of \( L / Z(L) \). In particular, \( ID^* \) is finite-superdimensional.
\end{thm}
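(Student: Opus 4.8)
The plan is to reduce an arbitrary element of $ID^{*}$ to the data of its values on a fixed lift of a minimal generating set of $L/Z(L)$, and then to bound the number of such values by a dimension count inside $L^{2}$. First I would fix a minimal generating set of $L/Z(L)$ realizing the pair $(p\vert q)$, say with $p$ even generators $\overline{x}_1,\dots,\overline{x}_p$ and $q$ odd generators $\overline{y}_1,\dots,\overline{y}_q$, and choose homogeneous lifts $x_1,\dots,x_p\in L_{\overline 0}$ and $y_1,\dots,y_q\in L_{\overline 1}$. Because their images generate $L/Z(L)$, the subsuperalgebra they generate together with $Z(L)$ spans all of $L$; that is, $L=\langle x_1,\dots,x_p,y_1,\dots,y_q\rangle+Z(L)$.

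The central step is to show that the restriction map
\[
\Phi:ID^{*}\To (L^{2})^{p}\oplus (L^{2})^{q},\qquad \alpha\mapsto\bigl(\alpha(x_1),\dots,\alpha(x_p),\alpha(y_1),\dots,\alpha(y_q)\bigr),
\]
is injective. Suppose $\alpha\in ID^{*}$ annihilates every $x_i$ and every $y_j$. Since $\alpha$ is a superderivation, the defining identity $\alpha[a,b]=[\alpha(a),b]+(-1)^{\vert\alpha\vert\vert a\vert}[a,\alpha(b)]$ propagates the vanishing of $\alpha$ from the generators to every iterated bracket, so $\alpha$ vanishes on the whole subsuperalgebra they generate; by the definition of $ID^{*}$ it also vanishes on $Z(L)$, and since these two pieces span $L$ we conclude $\alpha=0$. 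Hence $\Phi$ is injective, so the superdimension of $ID^{*}$ is controlled by that of the values $\alpha$ can take on the fixed generators.

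It remains to read off the bound parity by parity. If $\alpha\in ID^{*}_{\overline 0}$, then $\alpha(x_i)$ is even and lies in $L^{2}$, hence in $L^{2}_{\overline 0}$, while $\alpha(y_j)\in L^{2}_{\overline 1}$; injectivity of $\Phi$ on this component gives $\dim ID^{*}_{\overline 0}\le p\,\dim L^{2}_{\overline 0}+q\,\dim L^{2}_{\overline 1}$. Dually, for $\alpha\in ID^{*}_{\overline 1}$ the parity is reversed, so $\alpha(x_i)\in L^{2}_{\overline 1}$ and $\alpha(y_j)\in L^{2}_{\overline 0}$, yielding $\dim ID^{*}_{\overline 1}\le q\,\dim L^{2}_{\overline 0}+p\,\dim L^{2}_{\overline 1}$. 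Combining the two inequalities gives exactly $\mathrm{sdim}\,ID^{*}\le\uplambda(L^{2},p,q)$, and finiteness of $\mathrm{sdim}\,L^{2}$ together with finiteness of $p,q$ forces $ID^{*}$ to be finite-superdimensional.

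I expect the main obstacle to be the injectivity argument, specifically the bookkeeping showing that a superderivation vanishing on the chosen generators and on $Z(L)$ must vanish on all of $L$. One must check that the propagation along brackets respects the $\mathbb{Z}_2$-grading and the sign $(-1)^{\vert\alpha\vert\vert a\vert}$, and that the (possibly non-unique) decomposition of an element of $L$ as a bracket-polynomial in the generators plus a central term causes no ambiguity; it does not, since $\alpha$ is a single well-defined linear map whose value is already forced on each summand. Once injectivity is secured, the parity-graded dimension count is routine.
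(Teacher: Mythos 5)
Your argument is correct, and it is essentially the intended one: the paper does not prove this statement itself but imports it from \cite{Liu}, and the parity-by-parity inequalities you derive, $\dim ID^{*}_{\overline{0}}\le p\,\dim L^{2}_{\overline{0}}+q\,\dim L^{2}_{\overline{1}}$ and $\dim ID^{*}_{\overline{1}}\le q\,\dim L^{2}_{\overline{0}}+p\,\dim L^{2}_{\overline{1}}$, are exactly the form of the bound the paper later extracts from that proof in Proposition \ref{prop1}. The only step worth making explicit is that $ID^{*}=ID^{*}_{\overline{0}}\oplus ID^{*}_{\overline{1}}$, which holds because $L^{2}$ and $Z(L)$ are graded subspaces, so the homogeneous components of any element of $ID^{*}$ again lie in $ID^{*}$; with that noted, your restriction map $\Phi$ and the injectivity argument via propagation of vanishing along brackets and over $Z(L)$ go through as written.
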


\section{Main Results}
In this section, we obtain  converse of the Schur's theorem and we show  that $ st(L)=\uplambda (L^{2}, p, q)-\mathrm{sdim} L/Z(L), $ where 
$ \uplambda (L^{2},p,q)=(p.\mathrm{dim} L^{2}_{\overline{0}}+q.\mathrm{dim} L^{2}_{\overline{1}},  q.\mathrm{dim} L^{2}_{\overline{0}}+p.\mathrm{dim} L^{2}_{\overline{1}}) $ by using Theorem  \ref{l21} for $ st(L)=(r_1, s_1) $  such that $ r_1 $ and $ s_1 $ are non-negative integers. 
Finally, we determine  the structure of all finite-superdimensional nilpotent Lie superalgebras $ L $ up to isomorphism when $ st(L)\in \lbrace (0,0), (1,0), (0,1), (2,0), (0,2), (1,1) \rbrace. $
 The next theorem shows that  by putting some extra conditions converse of  the Schur's theorem is true.
\begin{thm}\label{l21}
Let  $ L $ be a Lie superalgebra such that $ \mathrm{sdim}  L^{2}=(r\vert s)$ such that $ r+s \leq \infty  $ and $ L/Z(L) $ be  an $ (p\vert q)$-generator. Then 
\begin{equation*}
\mathrm{sdim} L/Z(L)\leq  \uplambda (L^{2}, p, q). 
\end{equation*} 
\end{thm}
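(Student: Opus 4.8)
The plan is to reduce the statement to the bound on $ID^{*}$ furnished by Theorem~\ref{th1.5}, using the adjoint representation to identify $L/Z(L)$ with a graded subspace of $ID^{*}(L)$. First I would recall that for homogeneous $x\in L$ the inner derivation $ad_{x}\colon y\mapsto [x,y]$ carries parity $\vert x\vert$, so $x\mapsto ad_{x}$ is a parity-preserving linear map $L\to ad(L)$, indeed a homomorphism of Lie superalgebras. Its kernel is exactly $\lbrace x\in L : [x,L]=0\rbrace=Z(L)$, a graded ideal, so the first isomorphism theorem yields a \emph{graded} isomorphism $ad(L)\cong L/Z(L)$; comparing homogeneous components gives $\mathrm{sdim}\, ad(L)=\mathrm{sdim}\, L/Z(L)$.

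Next I would place $ad(L)$ inside $ID^{*}(L)$ as a graded subspace. This is the inclusion $ad(L)\le ID^{*}\le ID(L)$ already noted in the excerpt: $ad_{x}(L)=[x,L]\subseteq L^{2}$ shows $ad_{x}\in ID(L)$, while $ad_{x}(Z(L))=[x,Z(L)]=0$ shows $ad_{x}\in ID^{*}(L)$. Since this inclusion respects the $\mathbb{Z}_{2}$-grading, treating even and odd parts separately gives the componentwise inequality $\mathrm{sdim}\, ad(L)\le \mathrm{sdim}\, ID^{*}(L)$.

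Finally I would invoke Theorem~\ref{th1.5}. Its hypotheses hold verbatim here: $\mathrm{sdim}\, L^{2}=(r\vert s)$ with $r+s$ finite, and $L/Z(L)$ is finitely generated with minimal generator number pair $(p\vert q)$. Hence $\mathrm{sdim}\, ID^{*}(L)\le \uplambda(L^{2},p,q)$, and chaining the three relations gives
\[
\mathrm{sdim}\, L/Z(L)=\mathrm{sdim}\, ad(L)\le \mathrm{sdim}\, ID^{*}(L)\le \uplambda(L^{2},p,q),
\]
which is the assertion.

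The one step I would watch most carefully is the grading bookkeeping: I must confirm that the $\mathrm{sdim}$ comparisons are made componentwise (even-to-even and odd-to-odd) and that the adjoint map genuinely preserves parity, so that $ad(L)\cong L/Z(L)$ is an isomorphism of graded superspaces and not merely of underlying vector spaces. Granting that, the result is an immediate consequence of the established bound on $ID^{*}$, so no serious obstacle remains.
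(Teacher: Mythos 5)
Your proposal is correct and follows exactly the paper's own argument: the paper's proof is the one-line chain $L/Z(L)\cong ad(L)\leq ID^{*}(L)$ combined with Theorem~\ref{th1.5}. You have merely spelled out the parity and grading bookkeeping that the paper leaves implicit, so no further comment is needed.
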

\begin{proof}
Since $ L/Z(L)\cong ad(L)$ and $ ad(L) \leq ID^{*}(L),$  we have  $ \mathrm{sdim} L/Z(L)\leq  \uplambda (L^{2}, p, q) 
 $ by using Theorem \ref{th1.5}.
\end{proof}
We are going to classify the structure of all finite-superdimensional nilpotent Lie superalgebras $ L $ up to isomorphism when $ st(L)\in \lbrace (0,0), (1,0), (0,1), (2,0), (0,2), (1,1) \rbrace. $
 The next  two lemmas  show that it is sufficient to obtain 
the structure of all  nilpotent stem Lie superalgebras $ T $ such that $ \mathrm{sdim} T/Z(T)=\uplambda (T^{2},p,q) -st(T)$ for all $ st(T)\in \lbrace (0,0), (1,0), (0,1), (2,0), (0,2), (1,1) \rbrace. $
\begin{lem}\label{L1}
 Let $L$ be a finite-superdimensional Lie superalgebra. Then $L = T \bigoplus A$  and $Z(T) = L^{2} \cap Z(L),$ where $A$ is abelian and $T$ is non-abelian stem Lie superalgebra.
\end{lem}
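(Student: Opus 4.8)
The plan is to build the decomposition by hand from a carefully chosen complement of $L^{2}\cap Z(L)$ inside the center, and then to read off the stem property from the resulting direct sum. Since $Z(L)$ and $L^{2}$ are both $\mathbb{Z}_{2}$-graded subspaces of $L$, their intersection $L^{2}\cap Z(L)$ is a graded subspace, so I can choose a \emph{graded} complement $A$ with $Z(L)=(L^{2}\cap Z(L))\oplus A$. Because $A\subseteq Z(L)$, it is automatically a central abelian superideal, and $A\cap L^{2}\subseteq A\cap(L^{2}\cap Z(L))=0$.

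Next I would produce the complementary summand $T$. Since $L^{2}\cap A=0$, the sum $L^{2}+A$ is direct, and extending a graded basis I can pick a graded subspace $B$ with $L=L^{2}\oplus A\oplus B$; then set $T=L^{2}\oplus B$. This gives $L=T\oplus A$ as graded vector superspaces with $L^{2}\subseteq T$. That $T$ is a subsuperalgebra is immediate, since $[T,T]\subseteq[L,L]=L^{2}\subseteq T$ (in fact $T$ is a superideal). Because $A$ is central we have $[T,A]=0$, so $L=T\oplus A$ is genuinely a direct sum of Lie superalgebras, and moreover $[L,L]=[T,T]$ forces $T^{2}=L^{2}$.

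The crucial step is computing the center of $T$. Writing a general element of $L$ as $t+a$ with $t\in T$ and $a\in A$, and using that $A$ is central while $[T,A]=0$, one checks that $t+a\in Z(L)$ if and only if $t\in Z(T)$, whence $Z(L)=Z(T)\oplus A$. Comparing this with the defining decomposition $Z(L)=(L^{2}\cap Z(L))\oplus A$, and using both $L^{2}\cap Z(L)\subseteq L^{2}\subseteq T$ and $A\cap T=0$, I would conclude $Z(T)=Z(L)\cap T=L^{2}\cap Z(L)$, which is exactly the second assertion. Since $L^{2}=T^{2}$, this rewrites as $Z(T)=T^{2}\cap Z(L)\subseteq T^{2}$, so $T$ is a stem Lie superalgebra; and (assuming $L$ is non-abelian, which is the intended setting) $T^{2}=L^{2}\neq 0$ forces $T$ to be non-abelian as well.

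All of these computations are routine linear algebra once the grading is in place. The only point requiring genuine care — and the step I would treat as the main, if minor, obstacle — is ensuring that each complement ($A$, and then $B$) is taken as a $\mathbb{Z}_{2}$-graded subspace, so that $T$ and $A$ are honest sub-superspaces and the decomposition respects the super structure; the centralizer identity $Z(L)=Z(T)\oplus A$ must then be verified degreewise to remain compatible with the grading.
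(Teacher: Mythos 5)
Your proof is correct and follows essentially the same route as the paper: both choose a complement $A$ of $L^{2}\cap Z(L)$ inside $Z(L)$ and then lift a complement of $(A+L^{2})/L^{2}$ in $L/L^{2}$ to obtain $T$ (your $T=L^{2}\oplus B$ is exactly this preimage). You simply supply more of the routine verifications (gradedness of the complements, $[T,A]=0$, $Z(T)=Z(L)\cap T=L^{2}\cap Z(L)$, and the implicit non-abelian hypothesis on $L$) that the paper's two-line proof leaves to the reader.
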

\begin{proof}
Put $ M=L^{2}\cap Z(L). $ Since $ Z(L)=M\bigoplus A$ where $ A$ is an abelian Lie superalgebra, we have $ L/L^{2}=T/L^{2}\bigoplus (A+L^{2})/L^{2} $ for some Lie supersubalgebra $ T $ of $ L. $ Hence  $ L=T+A$ and $ T\cap A=0, $  so $ L=T\bigoplus A $ and $ Z(T)=L^{2}\cap Z(L). $ 
\end{proof}
\begin{lem}\label{l1}
Let $ L $ be a finite-superdimensional nilpotent Lie superalgebra. Then   $ L=T \bigoplus A$  where $ T $ is a stem Lie superalgebra and $ A$ is an abelian Lie superalgebra   and  $ st(L)=\uplambda (L^{2},p,q)-\mathrm{sdim} L/Z(L) $ for all $ st(L)=(r_1, s_1)$  such that $r_1+s_1 \geq 0 $ if and only if $ st(T)=\uplambda (T^{2},p,q)-\mathrm{sdim} T/Z(T) $ for all $ st(T)=(r_1, s_1)$  such that $r_1+s_1 \geq 0. $
\end{lem}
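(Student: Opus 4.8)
The plan is to reduce everything to the splitting already provided by Lemma~\ref{L1} and then check that passing from $L$ to its stem summand $T$ leaves all three ingredients of $st$ unchanged. First I would invoke Lemma~\ref{L1} to write $L = T \oplus A$ with $A$ abelian, $T$ a non-abelian stem Lie superalgebra, and $Z(T) = L^{2} \cap Z(L)$. Since $L = T \oplus A$ is a direct sum of superideals we have $[T, A] = 0$, and because $A$ is abelian this forces $A \subseteq Z(L)$. Consequently
\[
L^{2} = [T \oplus A,\, T \oplus A] = [T, T] = T^{2}.
\]
This identity $L^{2} = T^{2}$ is the core of the argument; from it $\uplambda(L^{2}, p, q) = \uplambda(T^{2}, p, q)$ follows for any fixed pair $(p\vert q)$, as $\uplambda$ depends only on $\mathrm{dim} L^{2}_{\overline{0}}, \mathrm{dim} L^{2}_{\overline{1}}$ and on $p, q$.

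Next I would pin down the centre. For $x \in T$ one has $x \in Z(L)$ if and only if $[x, T] = 0$ (the bracket with $A$ vanishing automatically), i.e. if and only if $x \in Z(T)$; together with $A \subseteq Z(L)$ this yields $Z(L) = Z(T) \oplus A$. Hence the quotient map induces an isomorphism $L/Z(L) \cong T/Z(T)$ of Lie superalgebras, so that $\mathrm{sdim} L/Z(L) = \mathrm{sdim} T/Z(T)$, and since isomorphic finite-superdimensional nilpotent Lie superalgebras share the same minimal generator number pair, the pair $(p\vert q)$ attached to $L/Z(L)$ coincides with the one attached to $T/Z(T)$.

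Assembling the two computations for this common $(p\vert q)$ gives
\[
\uplambda(L^{2}, p, q) - \mathrm{sdim} L/Z(L) = \uplambda(T^{2}, p, q) - \mathrm{sdim} T/Z(T),
\]
so that $st(L)$ and $st(T)$ are defined by the identical number pair $(r_1, s_1)$. The biconditional is then immediate: for any prescribed $(r_1, s_1)$ with $r_1 + s_1 \geq 0$, the equality $st(L) = \uplambda(L^{2}, p, q) - \mathrm{sdim} L/Z(L)$ holds exactly when $st(T) = \uplambda(T^{2}, p, q) - \mathrm{sdim} T/Z(T)$ holds, since the right-hand sides agree.

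I expect the only genuine subtlety to be bookkeeping rather than mathematics: one must remember that the pair $(p\vert q)$ is read off from $L/Z(L)$ (equivalently $T/Z(T)$) and \emph{not} from $L$ itself, since $L/L^{2} = T/T^{2} \oplus A$ does absorb the abelian part $A$. Once the isomorphism $L/Z(L) \cong T/Z(T)$ is established this ambiguity evaporates; and Proposition~\ref{prop0} remains available as a dimensional check, identifying $(p\vert q)$ through $\mathrm{sdim}(T/Z(T))$ modulo its derived subsuperalgebra should a purely superdimensional verification be preferred.
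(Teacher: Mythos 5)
Your proposal is correct and follows essentially the same route as the paper: decompose $L = T \bigoplus A$ via Lemma~\ref{L1}, observe that $L^{2} = T^{2}$ and $L/Z(L) \cong T/Z(T)$ so that the minimal generator pair and all superdimensions entering $st$ coincide for $L$ and $T$. The only cosmetic difference is that you deduce the equality of generator pairs directly from the isomorphism $L/Z(L)\cong T/Z(T)$, while the paper routes this through Proposition~\ref{prop0}; your version also fills in the verifications ($A\subseteq Z(L)$, $Z(L)=Z(T)\bigoplus A$) that the paper labels ``clearly.''
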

\begin{proof}
By using Lemma \ref{L1}, we have $ L=T \bigoplus A.$  Clearly  $ \mathrm{sdim} L/Z(L)=\mathrm{sdim} T/Z(T)$ and so  $ \mathrm{sdim} (L/Z(L))^{2}= \mathrm{sdim} (T/Z(T))^{2}. $  Hence 
$ sd(L/Z(L))=sd(T/Z(T))$ by using Proposition \ref{prop0}. On the other hand, $  \mathrm{sdim} L^{2}= \mathrm{sdim} T^{2}. $ Therefore 
the result follows.
\end{proof}
\begin{prop}\label{prop1}
Let $ L $ be an $ (k\vert l)$-superdimensional nilpotent Lie superalgebra with  $ \mathrm{sdim} L^{2}=(r\vert s)$ such that $r+s\geq 2 $ and the minimal generator number pairs $ (p\vert q)$ of $ L/Z(L). $ 
 Then $ st(L)=(r_1, s_1)$ such that $r_1+s_1> 0. $
\end{prop}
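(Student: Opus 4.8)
The plan is to observe that $r_{1}+s_{1}$ equals the total defect $(p+q)(r+s)-\dim L/Z(L)$, namely the difference between the sum $(p+q)(r+s)$ of the two coordinates of $\uplambda(L^{2},p,q)$ and $\dim L/Z(L)$, and then to prove that this defect is strictly positive. The first step is to pass to the stem part. By Lemma \ref{L1} we may write $L=T\bigoplus A$ with $A$ abelian and $T$ a non-abelian nilpotent stem superalgebra, and as in the proof of Lemma \ref{l1} we have $\mathrm{sdim}L/Z(L)=\mathrm{sdim}T/Z(T)$, $\mathrm{sdim}L^{2}=\mathrm{sdim}T^{2}$, and the same generator pair $(p\vert q)$; hence $st(L)=st(T)$. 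It therefore suffices to show $st(T)\neq(0,0)$ for $T$ with $\mathrm{sdim}T^{2}=(r\vert s)$ and $r+s\geq 2$.

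The second step is a dimension count for $T$. Since $T$ is nilpotent and $(p\vert q)$-generated, Proposition \ref{prop0} gives $\mathrm{sdim}T/T^{2}=(p\vert q)$; the generator pair of $T$ agrees with that of $T/Z(T)$ precisely because $Z(T)\subseteq T^{2}$ forces $(T/Z(T))/(T/Z(T))^{2}\cong T/T^{2}$. Consequently $\dim T=(p+q)+(r+s)$, so that
\begin{equation*}
\dim T/Z(T)=(p+q)+(r+s)-\dim Z(T).
\end{equation*}
As the two coordinates of $\uplambda(T^{2},p,q)=(pr+qs,\,qr+ps)$ sum to $(p+q)(r+s)$, the total defect is
\begin{equation*}
(p+q)(r+s)-\dim T/Z(T)=\bigl((p+q)-1\bigr)\bigl((r+s)-1\bigr)-1+\dim Z(T).
\end{equation*}

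The last step is to bound this from below. A non-zero nilpotent Lie superalgebra has non-trivial centre, so $\dim Z(T)\geq 1$, and with $a=p+q$, $b=r+s$ the defect is at least $(a-1)(b-1)$. Here $b\geq 2$ by hypothesis, and $a\geq 2$ as well: a one-generated nilpotent superalgebra has $\dim T^{2}\leq 1$ (an even generator yields an abelian algebra, while a single odd generator $\xi$ gives $T^{2}=\langle[\xi,\xi]\rangle$, using $[\xi,[\xi,\xi]]=0$ in characteristic $\neq 2,3$), which would contradict $r+s\geq 2$. Thus $(a-1)(b-1)\geq 1$, the total defect is at least $1$, and $st(L)=st(T)\neq(0,0)$, that is $r_{1}+s_{1}>0$. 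I expect the only delicate points to be the identification $\dim T/T^{2}=p+q$ through the stem hypothesis and the single-generator analysis securing $a\geq 2$; once these are settled the conclusion reduces to the elementary inequality $(a-1)(b-1)\geq 1$, so I anticipate no serious obstacle beyond arranging the count correctly.
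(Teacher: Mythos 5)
Your proof is correct, and it takes a genuinely different route from the paper's. The paper proves the proposition by reducing to total dimensions, splitting into cases according to $\mathrm{dim}\,(L/Z(L))^{2}$ (abelian central quotient; one-dimensional derived subalgebra, handled via the capability classification of Theorem \ref{l0}; and $\mathrm{dim}\,(L/Z(L))^{2}\geq 2$, handled by induction on $k+l$ with Table 1 supplying the base case), all threaded through the inequality \eqref{eq2.5}. You instead pass to the stem component $T$ via Lemma \ref{L1}, use the stem condition $Z(T)\subseteq T^{2}$ together with Proposition \ref{prop0} to identify $\mathrm{dim}\,T/T^{2}=p+q$, and obtain the exact identity $r_{1}+s_{1}=(p+q-1)(r+s-1)-1+\mathrm{dim}\,Z(T)$, from which positivity follows from $\mathrm{dim}\,Z(T)\geq 1$, $r+s\geq 2$, and the single-generator analysis forcing $p+q\geq 2$ (your use of $[\xi,[\xi,\xi]]=0$ is covered by the paper's standing assumption of characteristic $\neq 2,3$). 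Your dependencies (Lemmas \ref{L1} and \ref{l1}, Proposition \ref{prop0}, uniqueness of minimal generator pairs) are all available in the paper, and I verified your formula against several entries of Table 1 (e.g.\ $(4\vert 0)_{2}$, $(2\vert 3)_{5}$, $(2\vert 2)_{6}$). What your approach buys is substantial: it is shorter, avoids the capability theorem and the induction entirely, and the sharper bound $r_{1}+s_{1}\geq (p+q-1)(r+s-1)$ immediately yields Propositions \ref{prop2} and \ref{prop3} as well ($r+s\geq 3$ gives $r_{1}+s_{1}\geq 2$, and $r+s\geq 4$ gives $r_{1}+s_{1}\geq 3$), which the paper proves by separate and considerably longer arguments.
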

\begin{proof}
By using the proof of Theorem \ref{th1.5},  we have $ ID^{*}(L)=ID_{\overline{0}}^{*}(L) \bigoplus ID_{\overline{1}}^{*}(L),$  
\begin{align*}
&\mathrm{dim} ID_{\overline{0}}^{*}(L) \leq  p.\mathrm{dim} L^{2}_{\overline{0}}+q.\mathrm{dim} L^{2}_{\overline{1}}~~\text{and}\cr
& dim ID_{\overline{1}}^{*}(L) \leq q.\mathrm{dim} L^{2}_{\overline{0}}+p.\mathrm{dim} L^{2}_{\overline{1}}.
\end{align*}
Hence  $st(L)=\uplambda (L^{2},p,q)-\mathrm{sdim} L/Z(L)$ if and only if  $t(L)=\mathrm{dim} L^{2}d(L/Z(L))-\mathrm{dim} L/Z(L),$ where $ d=d(L/Z(L))=p+q $ and $ t(L)=r_1+s_1. $
Let $ L $ be a finite-superdimensional nilpotent Lie superalgebras such that $ \mathrm{dim} L^{2}=r+s\geq 2. $ We claim that $ t(L)=r_1+s_1\geq 1. $  
Let $ L/Z(L) $ be an  abelian Lie superalgebra, then $ \mathrm{dim} L/Z(L)=d $ and so
\begin{align}\label{eq2.1}
\mathrm{dim} L/Z(L)&=d\mathrm{dim} L^{2}-d\mathrm{dim} L^{2}+\mathrm{dim} L/Z(L) \cr
&= d\mathrm{dim} L^{2}+ \mathrm{dim} L/Z(L) (1-\mathrm{dim} L^{2}).
\end{align} 
Since $ \mathrm{dim} L^{2}\geq 2, $   we have $\mathrm{dim} L/Z(L) \leq d\mathrm{dim} L^{2}-1$ by using \eqref{eq2.1} and so $ t(L)\geq 1. $\\
 We know that 
 $$ d((L/Z(L))/Z(L/Z(L)))= d(L/Z_{2}(L))$$
  and so 
\begin{align}\label{eq4}
d((L/Z(L))/Z(L/Z(L)))&=d(L/Z_{2}(L))\cr
&=\mathrm{dim} (L/Z_{2}(L))- \mathrm{dim} (L/Z_{2}(L))^{2}\cr
&=\mathrm{dim} (L/Z_{2}(L))- \mathrm{dim} (L^{2}+Z_2(L))/Z_2(L)\cr
&=\mathrm{dim} (L/Z(L))- \mathrm{dim} (L^{2}+Z_2(L))/Z(L).
\end{align}
 On the other hand,  
 \begin{align*}
 &\mathrm{dim} (L^{2}+Z_2(L))/Z(L)\cr
 &=\mathrm{dim} (L^{2}+Z_2(L))/ (L^{2}+Z(L)) +\mathrm{dim} (L^{2}+Z(L))/Z(L)
   \end{align*}
   thus
 \begin{align*}
d((L/Z(L))/Z(L/Z(L)))&=d- \mathrm{dim} (L^{2}+Z_2(L))/ (L^{2}+Z(L)) 
\end{align*}
by using \eqref{eq4}.
Hence 
\begin{align}\label{e6}
& d(L/Z(L))/Z(L/Z(L)) \mathrm{dim} (L/Z(L))^{2}+\mathrm{dim} Z_2(L)/Z(L)\cr
&=d\mathrm{dim} L^{2}-d \mathrm{dim} L^{2}\cap Z(L)- \mathrm{dim} (L^{2}+Z_2(L))/ (L^{2}+Z(L)) \mathrm{dim}(L/Z(L))^{2}\cr
&+\mathrm{dim} Z_2(L)/Z(L).\cr
\end{align}
Also, $ d=\mathrm{dim} L/(L^{2}+Z(L)) $ and $  \mathrm{dim} L^{2}\cap Z(L)\geq 1, $ hence
\begin{align*}
& d(L/Z(L))/Z(L/Z(L)) \mathrm{dim} (L/Z(L))^{2}+\mathrm{dim} Z_2(L)/Z(L)\cr
&=d\mathrm{dim} L^{2}-d (\mathrm{dim} L^{2}\cap Z(L)-1)-\mathrm{dim} L/(L^{2}+Z(L))\cr
&- \mathrm{dim} (L^{2}+Z_2(L))/ (L^{2}+Z(L)) \mathrm{dim}(L/Z(L))^{2}+\mathrm{dim} Z_2(L)/Z(L).
\end{align*}
Since
\begin{align*}
& \mathrm{dim} Z_2(L)/Z(L) - \mathrm{dim} (L^{2}+Z_2(L))/ (L^{2}+Z(L)) \mathrm{dim}(L/Z(L))^{2}-\mathrm{dim} L/(L^{2}+Z(L))\cr
&\leq  \mathrm{dim} Z_2(L)/Z(L)-\mathrm{dim}(L/Z(L))^{2}-\mathrm{dim} L/(L^{2}+Z(L))\cr
&=\mathrm{dim} Z_2(L)/Z(L)-\mathrm{dim} L/Z(L)\cr
&=-\mathrm{dim} L/Z_2(L),
\end{align*}
we have
\begin{align}\label{eq2.5}
&d(L/Z(L))/Z(L/Z(L)) \mathrm{dim} (L/Z(L))^{2}+\mathrm{dim} Z_2(L)/Z(L)\cr
&\leq d\mathrm{dim} L^{2}-d (\mathrm{dim} L^{2}\cap Z(L)-1)-\mathrm{dim} L/Z_2(L).
\end{align}
Since $\mathrm{dim} (L/Z(L))/Z(L/Z(L))\leq d(L/Z(L))/Z(L/Z(L)) \mathrm{dim} (L/Z(L))^{2}, $ we have 
\begin{equation}\label{eq2.4}
\mathrm{dim} L/Z(L) \leq d\mathrm{dim} L^{2}-d (\mathrm{dim} L^{2}\cap Z(L)-1)-\mathrm{dim} L/Z_2(L)
\end{equation}
by using \eqref{eq2.5}.\\
Assume that $ \mathrm{dim}( L/Z(L))^{2}=1. $ Since $ \mathrm{dim}( L/Z(L))^{2}=1 $  and $ L/Z(L) $ is capable,  $ L/Z(L)$ is isomorphic to  $  H(1,0)\bigoplus A(k-t_1-3\vert l-t_2) $  for all $ k\geq t_1+3 $ and $ l\geq t_2 $ or  $ H_1\bigoplus A(k-t_1-1\vert l-t_2-2)$
 for all $ k\geq t_1+1 $ and $ l\geq t_2+2 $ with $ \mathrm{dim} Z(L)=(t_1, t_2)$ such that $ t_1+t_2\geq 1 $ by using Theorem \ref{l0}.  
 Hence $ \mathrm{dim} Z_2(L)=k+l-2 $ and so $ \mathrm{dim} L/Z_2(L)=2. $
Thus $ t(L)\geq 2 $
 by using \eqref{eq2.4}.
Now, let $ \mathrm{dim} (L/Z(L))^{2}\geq 2. $ We proceed by introduction on $ k+l $ to prove our result. Since $ \mathrm{dim} L^{2}=r+s\geq 2, $  we have $ k+l\geq 4. $  If $ k+l=4, $   then  there is no $ (k\vert l)$-superdimensional nilpotent Lie superalgebra $ L $ such that $ k+l=4 $ and $st(L)=(0,0) $  by looking at the Table \ref{ta1}.
 Since  $ \mathrm{dim} (L/Z(L))^{2}\geq 2, $  we have 
\begin{equation}\label{e3}
 \mathrm{dim} (L/Z(L))/Z(L/Z(L))\leq d(L/Z(L))/Z(L/Z(L)) \mathrm{dim} (L/Z(L))^{2}-1
\end{equation}
by using the induction hypothesis. 
Therefore  $ t(L)\geq 1 $ by using \eqref{e3} and \eqref{eq2.5} and so   $ st(L)=(r_1, s_1)$ such that $r_1+s_1> 0. $
\end{proof}
\begin{thm}\label{th1}
Let $ L $ be an $ (k\vert l)$-superdimensional nilpotent Lie superalgebra such that  $ L/Z(L) $ be finitely generated and  $ st(L)=(0,0).$ Then $ L $ is isomorphic to  one of the Lie superalgebra $ A(k\vert l) $ for all $ k+l\geq 1,$ $  H(m, n)\bigoplus A(k-2m-1\vert l-n) $ for all $ k\geq 2m+1 $ and $ l\geq n $ or  $  H_m\bigoplus A(k-m\vert l-m-1)$ for all $ k\geq m $ and $ l\geq m+1. $
\end{thm}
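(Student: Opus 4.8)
The plan is to reduce to a stem summand, force the derived superalgebra to be one-dimensional, recognise a Heisenberg structure, and then reassemble. First I would apply Lemma \ref{l1} to write $L = T \bigoplus A$ with $T$ a stem Lie superalgebra and $A$ abelian; as the proof of that lemma records, $\mathrm{sdim} L/Z(L)$, $sd(L/Z(L))$ and $\mathrm{sdim} L^{2}$ are all unchanged on passing to $T$, so $st(L) = st(T) = (0,0)$. If $L$ is abelian then $T = 0$ and $L \cong A(k\vert l)$, which is the first family (the condition $k+l \geq 1$ merely records $L \neq 0$). Hence I may assume that $L$, equivalently $T$, is non-abelian.

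Next I would pin down the derived superalgebra. Reading Proposition \ref{prop1} contrapositively, $st(L) = (0,0)$ gives $r_1 + s_1 = 0$, which excludes $\mathrm{sdim} L^{2} = (r\vert s)$ with $r + s \geq 2$; hence $\dim L^{2} \leq 1$. Since $T$ is non-abelian and $A$ is central in $T \bigoplus A$, we have $L^{2} = T^{2} \neq 0$, so $\dim T^{2} = 1$.

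I would then identify $T$ as Heisenberg. A nonzero nilpotent Lie superalgebra satisfies $Z(T) \neq 0$, the stem condition gives $Z(T) \subseteq T^{2}$, and $\dim T^{2} = 1$ forces $Z(T) = T^{2}$; this is exactly the defining property of a Heisenberg Lie superalgebra whose derived superalgebra is one-dimensional. Splitting on the parity of the generator of $T^{2} = Z(T)$ and invoking Theorem \ref{d1} (even centre) and Theorem \ref{d11} (odd centre) yields $T \cong H(m,n)$ or $T \cong H_m$. Reassembling $L = T \bigoplus A$ and matching superdimensions, with $\mathrm{sdim} H(m,n) = (2m+1\vert n)$ and $\mathrm{sdim} H_m = (m\vert m+1)$, produces the abelian complement $A(k-2m-1\vert l-n)$ with $k \geq 2m+1$, $l \geq n$ in the first Heisenberg case, and $A(k-m\vert l-m-1)$ with $k \geq m$, $l \geq m+1$ in the second.

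The step I expect to be most delicate is this collapse to $\dim L^{2} = 1$ together with the identification $Z(T) = T^{2}$; once these are in place everything downstream is forced by the cited Heisenberg classification and is essentially bookkeeping. To be sure the list is exactly the solution set rather than merely containing it, I would also run the converse: in each Heisenberg case $T/Z(T)$ is abelian, so $(p\vert q) = \mathrm{sdim} T/Z(T)$ by Proposition \ref{prop0}, and a direct evaluation of $\uplambda(T^{2}, p, q)$ then recovers $\mathrm{sdim} T/Z(T)$, confirming $st(T) = (0,0)$.
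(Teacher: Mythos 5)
Your proposal is correct and follows essentially the same route as the paper: use Proposition \ref{prop1} to rule out $\mathrm{sdim}\, L^{2}=(r\vert s)$ with $r+s\geq 2$, then combine Lemma \ref{l1} with Theorems \ref{d1} and \ref{d11} to identify the stem summand as Heisenberg and reassemble. You merely spell out the intermediate steps (the identification $Z(T)=T^{2}$ of dimension one and the converse check) that the paper leaves implicit.
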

\begin{proof}
Let $ \mathrm{sdim} L^{2}=(r\vert s).$ If $r+s\geq 2, $ then there is no such a nilpotent Lie superalgebra $ L $ by using Proposition \ref{prop1}. Hence $ r+s\leq 1 $ and so $ L $ is isomorphic to an abelian Lie superalgebra $ A(k\vert l) $ for all $ k+l\geq 1,$ $  H(m, n)\bigoplus A(k-2m-1\vert l-n) $ for all $ k\geq 2m+1 $ and $ l\geq n $ or  $  H_m\bigoplus A(k-m\vert l-m-1)$ for all $ k\geq m $ and $ l\geq m+1$ by using Lemma \ref{l1}, Theorems  \ref{d1} and \ref{d11}.
\end{proof}
\begin{prop}\label{prop2}
Let $ L $ be an $ (k\vert l)$-superdimensional nilpotent Lie superalgebra with $ \mathrm{sdim} L^{2}=(r\vert s)$ such that $r+s\geq 3 $ and 
the minimal generator number pairs $ (p\vert q)$ of $ L/Z(L). $ 
   Then $ st(L)=(r_1,s_1)$ such that $r_1+s_1\geq 2. $
\end{prop}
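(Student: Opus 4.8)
The plan is to reduce everything to a single clean identity for $ t(L):=r_{1}+s_{1} $ and then bound its two ingredients. First I would apply Lemma \ref{l1} to write $ L=T\bigoplus A $ with $ T $ a non-abelian stem Lie superalgebra and $ A $ abelian; since $ st(L)=st(T) $, $ \mathrm{dim} L^{2}=\mathrm{dim} T^{2}\geq 3 $, and $ L/Z(L)\cong T/Z(T) $ share the minimal generator pair $ (p\vert q) $, it suffices to treat the stem case, so I may assume $ Z(L)\subseteq L^{2} $ (in fact the identity below needs only that $ L $ is non-abelian nilpotent, so this reduction is a convenience). Writing $ d=p+q=d(L/Z(L)) $, I would combine two facts: taking total dimensions in the definition of $ st $ and in $ \uplambda(L^{2},p,q) $ gives $ t(L)=d\,\mathrm{dim} L^{2}-\mathrm{dim} L/Z(L) $, while Proposition \ref{prop0} applied to the nilpotent quotient $ L/Z(L) $ gives $ \mathrm{sdim}(L/Z(L))/(L/Z(L))^{2}=(p\vert q) $ and hence $ \mathrm{dim} L/Z(L)=d+\mathrm{dim}(L/Z(L))^{2} $.

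These yield the key identity
\begin{equation*}
t(L)=d\,(\mathrm{dim} L^{2}-1)-\mathrm{dim}(L/Z(L))^{2}.
\end{equation*}
Now $ (L/Z(L))^{2}=(L^{2}+Z(L))/Z(L)\cong L^{2}/(L^{2}\cap Z(L)) $, and the last nonzero term of the lower central series of a non-abelian nilpotent superalgebra lies in $ L^{2}\cap Z(L) $, so $ \mathrm{dim}(L^{2}\cap Z(L))\geq 1 $ and therefore $ \mathrm{dim}(L/Z(L))^{2}\leq \mathrm{dim} L^{2}-1 $. Substituting and using $ \mathrm{dim} L^{2}\geq 3 $,
\begin{equation*}
t(L)\geq (d-1)(\mathrm{dim} L^{2}-1)\geq 2(d-1).
\end{equation*}

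It therefore remains to show $ d=d(L/Z(L))\geq 2 $, and this is the step I expect to be the main obstacle. In the ordinary Lie algebra setting a one-generated quotient is abelian, so $ d=1 $ would force $ L $ to be abelian and one is done at once; but in the super setting a single odd generator $ x $ may satisfy $ [x,x]\neq 0 $, so that shortcut fails. To exclude $ d=1 $ directly, suppose $ \mathrm{dim} L/(L^{2}+Z(L))=1 $, spanned by the class of an element $ x $, so $ L=Kx+L^{2}+Z(L) $ and hence $ L^{2}=K[x,x]+L^{3} $. The graded Jacobi identity gives $ [x,[x,x]]=0 $ (this is where the characteristic $ \neq 3 $ hypothesis enters); together with $ [Z(L),[x,x]]=0 $ and $ [L^{2},[x,x]]\subseteq L^{4} $ this forces $ [L,[x,x]]\subseteq L^{4} $, whence $ L^{3}=[L,L^{2}]\subseteq L^{4} $ and $ L^{3}=0 $ by nilpotency. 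Thus $ \mathrm{dim} L^{2}\leq 1 $, contradicting $ \mathrm{dim} L^{2}\geq 3 $. Hence $ d\geq 2 $, and the displayed bound gives $ t(L)=r_{1}+s_{1}\geq 2(d-1)\geq 2 $, as required. (Alternatively, $ d\geq 2 $ can be extracted from capability: $ L/Z(L) $ is always capable, yet the two-dimensional superalgebra produced by a single odd generator has one-dimensional derived subsuperalgebra and is not among those allowed by Theorem \ref{l0}.)
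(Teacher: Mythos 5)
Your argument is correct, and it takes a genuinely different route from the paper's. The paper proves this proposition by a case analysis on $\mathrm{dim}\,(L/Z(L))^{2}\in\{0,1,2,\geq 3\}$: it reuses the inequalities \eqref{eq2.1}, \eqref{eq2.4} and \eqref{eq2.5} from the proof of Proposition \ref{prop1}, invokes the capability classification (Theorem \ref{l0}) when $\mathrm{dim}\,(L/Z(L))^{2}=1$, cites Proposition \ref{prop1} when it equals $2$, and handles the case $\geq 3$ by induction on $k+l$ with the base case read off from Table \ref{ta1}. You instead collapse everything into the closed-form identity
\begin{equation*}
t(L)=(d-1)(\mathrm{dim}\,L^{2}-1)+\mathrm{dim}\,(L^{2}\cap Z(L))-1,
\end{equation*}
obtained from $t(L)=d\,\mathrm{dim}\,L^{2}-\mathrm{dim}\,L/Z(L)$, Proposition \ref{prop0} applied to the nilpotent quotient $L/Z(L)$, and $(L/Z(L))^{2}\cong L^{2}/(L^{2}\cap Z(L))$; since the last nonzero term of the lower central series gives $\mathrm{dim}\,(L^{2}\cap Z(L))\geq 1$, this yields $t(L)\geq (d-1)(\mathrm{dim}\,L^{2}-1)$. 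You correctly identify the one genuinely super-specific obstacle, namely ruling out $d=1$, and your graded-Jacobi argument ($[[x,x],x]=0$ in characteristic $\neq 3$, forcing $L^{3}\subseteq L^{4}$ and hence $\mathrm{dim}\,L^{2}\leq 1$) handles it; the even-generator case is even easier since then $[x,x]=0$. Your route is shorter, self-contained, and avoids the table, capability theory and the induction entirely; moreover the single inequality $t(L)\geq (d-1)(\mathrm{dim}\,L^{2}-1)$ with $d\geq 2$ simultaneously recovers Propositions \ref{prop1}, \ref{prop2} and \ref{prop3}. What the paper's longer argument buys is mainly that its intermediate estimates (notably \eqref{eq2.5}) are reused verbatim in the later classification theorems, whereas your identity would have to be cited in their place.
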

\begin{proof}
Similar to the proof of Proposition \ref{prop1}, we have $st(L)=\uplambda (L^{2},p,q)-\mathrm{sdim} L/Z(L)$ if and only if  $t(L)=\mathrm{dim} L^{2}d(L/Z(L))-\mathrm{dim} L/Z(L)$ where $ t(L)=r_1+s_1. $
Let $ \mathrm{dim} (L/Z(L))^{2}=0. $ Since $ r+s\geq 3, $  we have $ t(L)=r_1+s_1 \geq 2$ by using \eqref{eq2.1}.
If $ \mathrm{dim} (L/Z(L))^{2}=1, $ then $\mathrm{dim} L/Z_2(L)=2 $ by using a similar method used in the proof of Proposition \ref{prop1}. So 
$ t(L)\geq 2 $  by using \eqref{eq2.4}.
Now, let $ \mathrm{dim} (L/Z(L))^{2}=2. $ Then
\begin{align}\label{eq2.6}
 \mathrm{dim} (L/Z(L))/Z(L/Z(L))\leq d(L/Z(L))/Z(L/Z(L)) \mathrm{dim} (L/Z(L))^{2}-1
\end{align}
 by using Proposition \ref{prop1}. Also, $ \mathrm{dim} (L^{2}\cap Z(L))-1\geq 2 $ or $ \mathrm{dim} L/Z_2(L)\geq 1 $ thus   
 $ t(L)=r_1+s_1\geq 2$ by using \eqref{eq2.6} and \eqref{eq2.5}.
Let $ \mathrm{dim} (L/Z(L))^{2}\geq 3. $ Since $ r+s\geq 3, $ we have $ k+l\geq 5. $ If $k+l=5, $ then there is no such a Lie superalgebra such that $ r_1+s_1\leq 1 $ by looking at Table \ref{ta1} and Lemma \ref{l1}. Since $ \mathrm{dim} (L/Z(L))^{2}\geq 3, $ 
\begin{equation}\label{le}
 \mathrm{dim} (L/Z(L))/Z(L/Z(L))\leq d(L/Z(L))/Z(L/Z(L)) \mathrm{dim} (L/Z(L))^{2}-2.
\end{equation}
by using the induction hypothesis. 
  One can see  $ t(L)=r_1+s_1\geq 2 $ by using  \eqref{le} and \eqref{eq2.5}, so $ st(L)=(r_1,s_1)$ such that $r_1+s_1\geq 2. $
\end{proof}
\begin{thm}\label{th2}
Let $ L $ be an $ (k\vert l)$-superdimensional nilpotent Lie superalgebra and   $ L/Z(L) $ be finitely generated with   $ st(L)=(r_1,s_1)$ such that $ r_1+s_1=1. $  
\begin{itemize}
\item[(i).] If $ st(L)=(1,0), $ then $ L $ is isomorphic to  $ (4\vert 0)_{2} \bigoplus A(k-4\vert l) $ for all $ k\geq 4 $ and $ l\geq 0 $ or   $ (1\vert 3)_{1} \bigoplus A(k-1\vert l-3) $ for all $ k\geq 1$ and $ l\geq 3. $
\item[(ii).] If $ st(L)=(0,1), $ then there is no such a  Lie superalgebra.
\end{itemize}
\end{thm}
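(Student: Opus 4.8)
The plan is to reduce everything to the stem case, use the dimension bounds already obtained to pin down the size of the stem algebra, and then read off the answer from the classification of \cite{Al} recorded in Table \ref{ta1}.

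By Lemma \ref{l1} I write $L = T \bigoplus A$ with $T$ stem and $A$ abelian. Since $\mathrm{sdim}\, L/Z(L) = \mathrm{sdim}\, T/Z(T)$, $\mathrm{sdim}\, L^{2} = \mathrm{sdim}\, T^{2}$ and $sd(L/Z(L)) = sd(T/Z(T))$, the pair $(p\vert q)$ is common to $L$ and $T$, whence $\uplambda(L^{2},p,q) = \uplambda(T^{2},p,q)$ and therefore $st(L) = st(T)$. So it suffices to classify the non-abelian stem $T$ with $st(T) = (1,0)$ or $st(T) = (0,1)$. Writing $\mathrm{sdim}\, T^{2} = (r\vert s)$, I first determine $r+s$: if $r+s \geq 3$ then Proposition \ref{prop2} forces $r_{1}+s_{1} \geq 2$, and if $r+s \leq 1$ then $T$ is abelian or Heisenberg and a direct computation (as in the proof of Theorem \ref{th1}, using Theorems \ref{d1} and \ref{d11}) gives $st(T) = (0,0)$. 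Hence the hypothesis $r_{1}+s_{1} = 1$ forces $r+s = 2$, which is also consistent with Proposition \ref{prop1}.

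The key step is the dimension count. Since $(p\vert q)$ is also the minimal generator pair of $T$, Proposition \ref{prop0} gives $\mathrm{sdim}\, T/T^{2} = (p\vert q)$, so with $d = p+q$ one has $\dim T = d + \dim T^{2} = d + 2$. From the identity $t(T) = \dim T^{2}\cdot d - \dim T/Z(T) = 2d - \dim T/Z(T)$ established in the proof of Proposition \ref{prop1}, together with $\dim T/Z(T) = \dim T - \dim Z(T) = d + 2 - \dim Z(T)$, I obtain $t(T) = d - 2 + \dim Z(T)$. Setting $t(T) = r_{1}+s_{1} = 1$ gives $\dim Z(T) = 3 - d$. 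Since $T$ is non-abelian nilpotent with $\dim T^{2} = 2$ one has $d \geq 2$ and $1 \leq \dim Z(T) \leq \dim T^{2} = 2$, and these inequalities force $d = 2$, $\dim Z(T) = 1$, hence $\dim T = 4$.

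Finally, $T$ is a $4$-dimensional stem nilpotent Lie superalgebra with $\mathrm{sdim}\, T^{2} = (r\vert s)$, $r+s = 2$, so it occurs among the entries with $k+l = 4$ in the list taken from \cite{Al}, namely $(4\vert 0)_{2}$, $(2\vert 2)_{1}$, $(2\vert 2)_{4}$, $(2\vert 2)_{6}$ and $(1\vert 3)_{1}$. For each I read $\mathrm{sdim}\, T/Z(T)$, $sd(T/Z(T))$ and $\mathrm{sdim}\, T^{2}$ off Table \ref{ta1} and evaluate $\uplambda(T^{2},p,q) - \mathrm{sdim}\, T/Z(T)$: the algebras $(4\vert 0)_{2}$ and $(1\vert 3)_{1}$ yield $(1,0)$, while $(2\vert 2)_{1}, (2\vert 2)_{4}$ yield $(0,2)$ and $(2\vert 2)_{6}$ yields $(1,1)$. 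Thus exactly $(4\vert 0)_{2}$ and $(1\vert 3)_{1}$ give $st = (1,0)$, and none gives $st = (0,1)$. Reassembling $L = T \bigoplus A$ and matching the even and odd superdimensions produces the two families in part (i), and the absence of any candidate yields part (ii). The only genuine obstacle is the dimension count forcing $\dim T = 4$, since that is precisely what makes the classification table exhaustive for $T$; once it is in place, the remainder is a finite verification.
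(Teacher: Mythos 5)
Your proof is correct and reaches the same endpoint as the paper's, but the middle step is organized differently. The paper, after reducing to the stem case and forcing $r+s=2$ exactly as you do, splits according to the nilpotency class of the stem part: for class two it gets $Z(T)=T^{2}$, hence $k+l=3$, and rules this out by consulting the classification in \cite{Al}; for class three it computes $d(T/Z(T))=k+l-2$ and $\dim T/Z(T)=k+l-1$, whence $t(T)=k+l-3=1$ and $k+l=4$. You instead derive the uniform identities $\dim T=d+\dim T^{2}=d+2$ (via Proposition \ref{prop0} applied to the stem part, using $Z(T)\subseteq T^{2}$ so that the generator pair of $T/Z(T)$ equals that of $T$) and $t(T)=d-2+\dim Z(T)$, which together with $1\leq\dim Z(T)\leq 2$ pin down $\dim T=4$ without any case split on the class. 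This is a cleaner and more mechanical route to the same Table~\ref{ta1} lookup. The one spot where you should add a word is the inequality $d\geq 2$: unlike the Lie algebra case, a non-abelian nilpotent Lie superalgebra can be generated by a single (odd) element, so $d=1$ is not excluded by non-abelianness alone; it is excluded because a one-generator nilpotent Lie superalgebra over a field of characteristic $\neq 2,3$ satisfies $[f,[f,f]]=0$ by the super Jacobi identity, forcing $\dim T^{2}\leq 1$, which contradicts $\dim T^{2}=2$ (the paper sidesteps this by citing the classification for $k+l=3$). With that remark supplied, your finite verification over the five $k+l=4$ stem entries correctly yields $(4\vert 0)_{2}$ and $(1\vert 3)_{1}$ for $st=(1,0)$ and nothing for $st=(0,1)$.
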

\begin{proof}
Assume that  $ \mathrm{sdim} L^{2}=(r\vert s). $ If $ r+s\leq 1, $ then $ L $ is isomorphic to $ A(k\vert l) $  for all $ k+l\geq 1, $ $  H(m, n)\bigoplus A(k-2m-1\vert l-n) $ for all $ k\geq 2m+1 $ and $ l\geq n $ or  $  H_m\bigoplus A(k-m\vert l-m-1)$ for all $ k\geq m $ and $ l\geq m+1 $ by using Lemma \ref{l1} and Theorems  \ref{d1} and \ref{d11},
so $ st(L)=(0,0). $  Hence there is no such a nilpotent Lie superalgebra with  $  st(L)=(r_1,s_1)$ such that $ r_1+s_1= 1 $ when  $ r+s\leq 1. $
Assume that  $r+s \geq 3. $ Since  $r+s \geq 3, $
 there is no such a nilpotent Lie superalgebra $ L $  with  $  st(L)=(r_1, s_1)$ such that $ r_1+ s_1=1 $ by using Proposition \ref{prop2}. Let $ r+s=2. $ Then $ L $ is of nilpotency  two or three. In the rest it is sufficient to obtain the structure of all stem nilpotent Lie superalgebras  $ L $ with $ st(L)= (r_1, s_1) $ such that $ r_1+s_1=1 $  and then  we determine   the structure of  all nilpotent Lie superalgebras  $ L $ with $ st(L)= (r_1, s_1) $ such that $ r_1+s_1=1 $    by using Lemma \ref{l1}. Assume that $ L $ is a stem Lie superalgebra  of nilpotency class two.  Hence $ Z(L)=L^{2} $ and so   $ d(L/Z(L))=\mathrm{dim} L/Z(L). $ On the other hand, $ st(L)= (r_1, s_1) $ such that $ r_1+s_1=1 $  by our assumption. Thus $ k+l=3. $ But there is no $ (k\vert l)$-superdimensional stem  nilpotent Lie superalgebra $ L $ such that $ k+l=3 $ and $ st(L)= (r_1, s_1) $ such that $ r_1+s_1=1 $  by looking at the classification of all nilpotent Lie superalgebras in \cite{Al}.  Let $ L $ be a stem Lie superalgebra of nilpotency class $ 3. $ Then $ \mathrm{dim} Z(L)=(t_1, t_2) $ such that $ t_1+t_2=1.$ So,
 $ d(L/Z(L))=k+l-2 $ and $ \mathrm{dim} L/Z(L)=k+l-1. $ Since  $  r_1+s_1=1 $ by our assumption, we have $ l+k=4. $ By looking at Table \ref{ta1}, we have $ L $ is isomorphic to $ (4\vert 0)_{2} $ or $ (1\vert 3)_{1} $
  when $ L $ is a stem nilpotent Lie superalgebra. Hence  $ L $ is isomorphic to 
 $ (4\vert 0)_{2} \bigoplus A(k-4\vert l) $ for all $ k\geq 4 $ and $ l\geq 0 $ or   $ (1\vert 3)_{1} \bigoplus A(k-1\vert l-3) $ for all $ k\geq 1$ and $ l\geq 3$  by using Lemma \ref{l1}.   Therefore if $ st(L)=(1,0), $ then $ (4\vert 0)_{2} \bigoplus A(k-4\vert l) $ for all $ k\geq 4 $ and $ l\geq 0 $ or   $ (1\vert 3)_{1} \bigoplus A(k-1\vert l-3) $ for all $ k\geq 1$ and $ l\geq 3$  and there is no such a Lie superalgebra with $ st(L)=(0,1). $
\end{proof}
\begin{lem}\label{l27}
Let $ L $ be a stem nilpotent $ (k\vert l)$-superdimensional Lie superalgebra with $ \mathrm{sdim}L^{2}=(r\vert s) $ such that $ k+l=6 $ and $ r+s\geq 4. $ Then $ st(L)= (r_1, s_1) $ such that $ r_1+ s_1\geq 3. $
\end{lem}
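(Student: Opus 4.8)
The plan is to reduce everything to the scalar identity for $t(L):=r_1+s_1$ that is already extracted in the proof of Proposition \ref{prop1}, and then to exploit how rigid the two numerical hypotheses $k+l=6$ and $r+s\geq 4$ really are. Summing the two coordinates of the defining equality $st(L)=\uplambda(L^2,p,q)-\mathrm{sdim}\,L/Z(L)$, with $(p\vert q)$ the minimal generator pair of $L/Z(L)$, and using $\mathrm{dim}L^2_{\overline 0}+\mathrm{dim}L^2_{\overline 1}=\mathrm{dim}L^2$, I get
\[
t(L)=r_1+s_1=(p+q)\,\mathrm{dim}L^2-\mathrm{dim}\,L/Z(L)=d(L/Z(L))\cdot\mathrm{dim}L^2-\mathrm{dim}\,L/Z(L).
\]
So it suffices to show the right-hand side is at least $3$.

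First I would pin down $\mathrm{dim}L^2$. Since $L$ is nilpotent, $d(L)=\mathrm{dim}\,L/L^2$ by Proposition \ref{prop0}. A one-generated nilpotent Lie superalgebra has $\mathrm{dim}L^2\leq 1$: if the single generator is even its bracket with itself vanishes and $L^2=0$, while if it is an odd element $f$ then $L^2=\langle[f,f]\rangle$ is at most one-dimensional because $[f,[f,f]]=0$ over a field of characteristic different from $3$. Hence $\mathrm{dim}L^2\geq 4$ forces $d(L)\geq 2$, and consequently $\mathrm{dim}L^2=(k+l)-d(L)=6-d(L)\leq 4$. Combined with the hypothesis $r+s\geq 4$ this collapses to the single possibility $\mathrm{dim}L^2=4$ and $d(L)=2$; in particular $\mathrm{dim}L^2=5$ is excluded.

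Next I would bring in the stem hypothesis $Z(L)\subseteq L^2$. This gives $(L/Z(L))^2=L^2/Z(L)$ and $L^2+Z(L)=L^2$, so that $d(L/Z(L))=\mathrm{dim}\,L/(L^2+Z(L))=\mathrm{dim}\,L/L^2=2$, while $\mathrm{dim}\,L/Z(L)=6-\mathrm{dim}\,Z(L)$. Substituting into the identity for $t(L)$ yields
\[
t(L)=2\cdot 4-(6-\mathrm{dim}\,Z(L))=2+\mathrm{dim}\,Z(L).
\]
Since $L$ is a nonabelian nilpotent Lie superalgebra its center is nonzero, so $\mathrm{dim}\,Z(L)\geq 1$ and therefore $r_1+s_1=t(L)\geq 3$, which is the assertion.

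The genuinely delicate input is the bound $d(L)\geq 2$, i.e. ruling out one-generated $L$; this rests on the identity $[f,[f,f]]=0$ for odd $f$, valid precisely because $\mathrm{char}\neq 3$, which is why the characteristic hypotheses underlying the classification are used. Everything else is forced by dimension counting, so—unlike Propositions \ref{prop1} and \ref{prop2}—I do not expect to need induction on $k+l$ or an appeal to Table \ref{ta1} here; the whole lemma hinges on the observation that $\mathrm{dim}L^2$ is pinned to $4$ and $d(L/Z(L))$ to $2$, after which the inequality is immediate.
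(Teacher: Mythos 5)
Your proof is correct, and it takes a genuinely more direct route than the paper's. The paper argues by contradiction on $t(L)=r_1+s_1\leq 2$: the subcases $t(L)\in\{0,1\}$ are dispatched by invoking the classifications already established in Theorems \ref{th1} and \ref{th2}, while for $t(L)=2$ it computes $d(L/Z(L))$ from the stem condition and derives either $\dim Z(L)=0$ (when $\dim L^{2}=4$) or a putative $3$-dimensional quotient $L/Z(L)$ with $2$-dimensional derived subsuperalgebra that is then excluded by consulting the classification in \cite{Al} (when $\dim L^{2}=5$). You instead prove the bound directly: you eliminate $\dim L^{2}=5$ once and for all by the elementary observation that a one-generated nilpotent Lie superalgebra has dimension at most $2$ (this is where $\mathrm{char}\neq 3$ enters, via $[f,[f,f]]=0$), which pins $\dim L^{2}=4$ and $d(L/Z(L))=\dim L/L^{2}=2$ by the stem hypothesis together with Proposition \ref{prop0}; the identity $t(L)=d(L/Z(L))\dim L^{2}-\dim L/Z(L)$ then gives $t(L)=2+\dim Z(L)\geq 3$ since the center of a nonzero nilpotent Lie superalgebra is nonzero. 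Your argument is self-contained (no appeal to Theorems \ref{th1}, \ref{th2} or to the table from \cite{Al}), handles all values of $t(L)$ at once, and in fact yields the sharper statement $t(L)=2+\dim Z(L)$ in the only non-vacuous case; the paper's version has the merit of fitting its overall strategy of leaning on the low-superdimensional classification, but is logically heavier. Both are valid proofs of the lemma.
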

\begin{proof}
On the contrary, let  $ st(L)= (r_1, s_1) $ such that $ r_1+ s_1\leq 2. $ Then similar to the proof of Proposition \ref{prop1} we have $st(L)=\uplambda (L^{2},p,q)-\mathrm{sdim} L/Z(L)$ if and only if  $t(L)=\mathrm{dim} L^{2}d(L/Z(L))-\mathrm{dim} L/Z(L)$ where 
$ t(L)=r_1+s_1 $ and $ d(L/Z(L))=p+q. $
  Thus we can consider $ t(L)=r_1+s_1\leq 2. $ Since $ k+l=6 $ and  $ r+s\geq 4, $ we have $ \mathrm{dim} L^{2} $ is equal to $ 4 $ or $ 5. $
 If $ t(L)=r_1+s_1 $ is equal to $ 0 $ or $ 1, $ then we get a contradiction by using Theorems \ref{th1} and \ref{th2}.
If $ t(L)=r_1+s_1=2 $ and $ \mathrm{dim} L^{2}=4, $ then $ d(L/Z(L))=2. $ Hence 
$\mathrm{dim} L/Z(L)=6  $ and so $ \mathrm{dim} Z(L)=0. $ It is a contradiction. If  $ t(L)=r_1+s_1=2 $ and $ \mathrm{dim} L^{2}=5, $ then $ d(L/Z(L))=1. $
Thus $ \mathrm{dim} L/Z(L)=3 $ and $ \mathrm{dim} (L/Z(L))^{2}=2. $ Now, by looking out  \cite{Al} there is no such a Lie superalgebra       $ L/Z(L) $ which  is a contradiction.
Therefore $ st(L)= (r_1, s_1) $ such that $ r_1+ s_1\geq 3. $
\end{proof}
\begin{prop}\label{prop3}
Let $ L $ be   an  $ (k\vert l)$-superdimensional nilpotent Lie superalgebra with $ \mathrm{sdim} L^{2}=(r\vert s)$ such that $r+s\geq 4 $
and the minimal generator number pairs $ (p\vert q)$ of $ L/Z(L). $ 
 Then $ st(L)=(r_1,s_1)$ such that $r_1+s_1\geq 3. $
\end{prop}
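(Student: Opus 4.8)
The plan is to follow verbatim the case-analysis and induction scheme used in the proofs of Propositions \ref{prop1} and \ref{prop2}, merely raising the target bound from $1$ and $2$ to $3$. First I would record the same reduction that opens those proofs: $st(L)=\uplambda(L^{2},p,q)-\mathrm{sdim} L/Z(L)$ holds if and only if $t(L)=\mathrm{dim} L^{2}\,d(L/Z(L))-\mathrm{dim} L/Z(L)$, where $d=d(L/Z(L))=p+q$ and $t(L)=r_{1}+s_{1}$. Hence it suffices to prove $t(L)\geq 3$ under the hypothesis $\mathrm{dim} L^{2}=r+s\geq 4$, and I would then split the argument according to the value of $\mathrm{dim}(L/Z(L))^{2}$.

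For the low cases I would recycle the inequalities already established. When $\mathrm{dim}(L/Z(L))^{2}=0$, inequality \eqref{eq2.1} together with $\mathrm{dim} L^{2}\geq 4$ gives $t(L)\geq 3$ at once. When $\mathrm{dim}(L/Z(L))^{2}=1$, the capability argument through Theorem \ref{l0} again forces $\mathrm{dim} L/Z_{2}(L)=2$, and since $\mathrm{dim} L^{2}\geq 4$ makes the term $\mathrm{dim}(L^{2}\cap Z(L))-1$ large, inequality \eqref{eq2.4} delivers $t(L)\geq 3$. The cases $\mathrm{dim}(L/Z(L))^{2}=2$ and $\mathrm{dim}(L/Z(L))^{2}=3$ I would handle by feeding the conclusions of Propositions \ref{prop1} and \ref{prop2} into \eqref{eq2.5}: the former supplies a deficit of $1$ and the latter a deficit of $2$ in the estimate for $\mathrm{dim}(L/Z(L))/Z(L/Z(L))$, and combining this deficit with the contributions of $\mathrm{dim}(L^{2}\cap Z(L))-1$ and $\mathrm{dim} L/Z_{2}(L)$ should raise $t(L)$ to at least $3$.

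The remaining case $\mathrm{dim}(L/Z(L))^{2}\geq 4$ I would treat by induction on $k+l$. Since $r+s\geq 4$ forces $k+l\geq 6$, the base case is $k+l=6$, which is exactly Lemma \ref{l27}, after first reducing to the stem situation via Lemma \ref{l1} so that $\mathrm{dim}(L^{2}\cap Z(L))\geq 1$ and $L/Z(L)$ has the required superdimension. For the inductive step, applying the induction hypothesis to $L/Z(L)$ gives $\mathrm{dim}(L/Z(L))/Z(L/Z(L))\leq d(L/Z(L))/Z(L/Z(L))\,\mathrm{dim}(L/Z(L))^{2}-3$, and substituting this deficit into \eqref{eq2.5} yields $t(L)\geq 3$, so $st(L)=(r_{1},s_{1})$ with $r_{1}+s_{1}\geq 3$.

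I expect the main obstacle to be the bookkeeping in the boundary cases $\mathrm{dim}(L/Z(L))^{2}=2$ and $\mathrm{dim}(L/Z(L))^{2}=3$: there one must check that the modest deficit inherited from the earlier proposition, added to the central terms $\mathrm{dim}(L^{2}\cap Z(L))-1$ and $\mathrm{dim} L/Z_{2}(L)$, genuinely sums to at least $3$ rather than merely reproducing the weaker bound $2$. The other delicate point is the base case $k+l=6$, where Lemma \ref{l27} does the heavy lifting by appealing to the classification in \cite{Al} to exclude the sporadic stem superalgebras that could otherwise violate the bound.
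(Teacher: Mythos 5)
Your overall scheme is exactly the paper's: the same reduction to $t(L)=\mathrm{dim}\,L^{2}\,d(L/Z(L))-\mathrm{dim}\,L/Z(L)$, the same case split on $\mathrm{dim}(L/Z(L))^{2}$, the same use of \eqref{eq2.1}, the capability argument via Theorem \ref{l0}, the deficits inherited from Propositions \ref{prop1} and \ref{prop2} fed into \eqref{eq2.5}, and the same induction on $k+l$ with Lemma \ref{l27} as the base case for $\mathrm{dim}(L/Z(L))^{2}\geq 4$.

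The one place where your proposal stops short of a proof is the case $\mathrm{dim}(L/Z(L))^{2}=2$, and you correctly sensed it: there Proposition \ref{prop1} only supplies a deficit of $1$, so the generic estimate reproduces $t(L)\geq 2$ rather than $3$, and "should raise $t(L)$ to at least $3$" does not yet follow. The paper closes this by a further subdivision on $t(L/Z(L))$. If $t(L/Z(L))\geq 2$ the deficit improves to $2$ and the central terms contribute the last unit. If $t(L/Z(L))=1$, the paper invokes Theorem \ref{th2} to identify $L/Z(L)$ explicitly as $(4\vert 0)_{2}\bigoplus A$ or $(1\vert 3)_{1}\bigoplus A$, and reads off from that structure the extra quantities $\mathrm{dim}(L^{2}\cap Z(L))=2$ and $\mathrm{dim}\,L/Z_{2}(L)\geq 1$ needed in \eqref{eq2.5}. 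That appeal to the classification of Theorem \ref{th2} is the one idea missing from your outline; without it the $\mathrm{dim}(L/Z(L))^{2}=2$ case does not reach the bound $3$.
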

\begin{proof}
By using a similar way used in the proof of Theorem \ref{prop1}, we have $st(L)=\uplambda (L^{2},p,q)-\mathrm{sdim} L/Z(L)$ if and only if  $t(L)=\mathrm{dim} L^{2}d(L/Z(L))-\mathrm{dim} L/Z(L),$ where $ t(L)=r_1+s_1 $ and $ d(L/Z(L))=p+q. $
Now, let $ \mathrm{dim} (L/Z(L))^{2}=0. $ Since $ r+s \geq 4, $  we have $ t(L) \geq 3$ by using \eqref{eq2.1}.
Assume that  $ \mathrm{dim} (L/Z(L))^{2}=1. $ Since $ L/Z(L) $ is capable,  we have 
$ L/Z(L)$ is isomorphic to  $  H(1,0)\bigoplus A(k-t_1-3\vert l-t_2) $  for all $ k\geq t_1+3 $ and $ l\geq t_2 $ or  $ H_1\bigoplus A(k-t_1-1\vert l-t_2-2)$
 for all $ k\geq t_1+1 $ and $ l\geq t_2+2 $ with $ \mathrm{sdim} Z(L)=(t_1, t_2)$ such that $ t_1+t_2\geq 1 $ by using Lemma \ref{l0}.
 On the other hand, $ \mathrm{dim} Z_2(L)=k+l-2 $ and so $ \mathrm{dim} L/Z_2(L)=2. $  Also, $ \mathrm{dim} L^{2}/L^{2} \cap Z(L)=1 $ and since $ \mathrm{dim} L/Z_2(L)=2, $ we have 
$ \mathrm{dim} L/Z(L)/Z(L/Z(L))\leq d(L/Z_2(L))\mathrm{dim} (L/Z(L))^{2}, $  we have $ t(L)\geq 3 $ by using \eqref{eq2.5}.\\
Assume that $ \mathrm{dim} (L/Z(L))^{2}= 2.$  By using  Proposition \ref{prop1}  we have
\begin{equation}\label{e4}
\mathrm{dim} L/Z(L)/Z(L/Z(L))\leq d(L/Z_2(L))\mathrm{dim} (L/Z(L))^{2}-1.
\end{equation}
If $ t(L/Z(L))=1, $ then  $ L/Z(L)$ is isomorphic to 
 $ (4\vert 0)_{2} \bigoplus A(k-t_1-4\vert l-t_2) $ for all $ k-t_1\geq 4 $ and $ l-t_2\geq 0 $ or   $  (1\vert 3)_{1} \bigoplus A(k-t_1-1\vert l-t_2-3) $ for all $ k-t_1\geq 1$ and $ l-t_2\geq 3 $ with $ \mathrm{sdim} Z(L)=(t_1, t_2)$ such that $ t_1+t_2\geq 1 $ and  $ r+s\geq 4 $   by using Theorem \ref{th2}. 
 Hence  $ \mathrm{dim} L^{2}\cap Z(L)= 2$ and  $ \mathrm{dim} L/Z_2(L)\geq 1. $ 
 Therefore $ t(L)\geq 3$  by using \eqref{e4} and \eqref{eq2.5}. If $  t(L/Z(L))\geq 2 $ , then
 \begin{equation}\label{eq2.9}
\mathrm{dim} L/Z(L)/Z(L/Z(L))\leq d(L/Z_2(L))\mathrm{dim} (L/Z(L))^{2}-2
\end{equation}
and so 
\begin{align}\label{eq2.10}
\mathrm{dim} L/Z(L) \leq d\mathrm{dim} L^{2}-d (\mathrm{dim} L^{2}\cap Z(L)-1)-\mathrm{dim} L/Z_2(L)-2
\end{align}
by using \eqref{eq2.5}.
 Since $ (\mathrm{dim} (L^{2}\cap Z(L))-1)\geq 3 $ or $ \mathrm{dim} L/Z_2(L)\geq 1, $  we have $  t(L)=r_1+s_1\geq 3. $
  If $ \mathrm{dim} (L/Z(L))^{2}= 3,$ then
 \begin{equation}\label{eq2.11}
 \mathrm{dim} (L/Z(L))/Z(L/Z(L))\leq d(L/Z(L))/Z(L/Z(L)) \mathrm{dim} (L/Z(L))^{2}-2
\end{equation}
by using Proposition \ref{prop2}. On the other hand, $ \mathrm{dim} (L^{2}\cap Z(L))-1\geq 3 $ or $ \mathrm{dim} L/Z_2(L)\geq 1 $ thus $  t(L)=r_1+s_1\geq 3 $ by using \eqref{eq2.11} and \eqref{eq2.5}.
  \\
Let  $ \mathrm{dim} (L/Z(L))^{2}\geq  4.$ Then we claim that $  t(L)=r_1+s_1\geq 3. $ 
Since   $ r+s\geq 4, $ we have $ \mathrm{dim} L=l+k\geq 6. $ If $ l+k=6, $ then $  t(L)=r_1+s_1\geq 3 $ by using Lemma \ref{l27}.
Since $ \mathrm{dim} (L/Z(L))^{2}=4, $
\begin{equation}\label{eq2.12}
 \mathrm{dim} (L/Z(L))/Z(L/Z(L))\leq d(L/Z(L))/Z(L/Z(L)) \mathrm{dim} (L/Z(L))^{2}-3
\end{equation}
by using the induction hypothesis.
 Also,  $ (\mathrm{dim} (L^{2}\cap Z(L))-1)\geq 3 $ or $ \mathrm{dim} L/Z_2(L)\geq 1. $ One can see  $  t(L)=r_1+s_1\geq 3 $
   by using \eqref{eq2.12} and \eqref{eq2.5}. Therefore $ st(L)=(r_1,s_1)$ such that $r_1+s_1\geq 3. $
  \end{proof}
\begin{thm}\label{th3}
Let $ L $ be an $ (k\vert l)$-superdimensional nilpotent Lie algebra and   $ L/Z(L) $ be finitely generated and   $ st(L)=(r_1,s_1)$ such that $r_1+s_1= 2. $ Then
\begin{itemize}
\item[(i).] if $ st(L)=(2,0), $ then $ L $ is isomorphic to one of the Lie superalgebras  $ (5\vert 0)_{3}\bigoplus A(k-5,l) $ for all $ k\geq 5 $ and $ l\geq 0, $ $ (5\vert 0)_{4}\bigoplus A(k-5,l) $ for all $ k\geq 5 $ and $ l\geq 0, $
 $ (5\vert 0)_{5}\bigoplus A(k-5,l) $ for all $ k\geq 5 $ and $ l\geq 0, $
  $ (1\vert 4)_{7}\bigoplus A(k-1, l-4)$ for all $ k\geq 1 $ and $ l\geq 4 $
  or $ (2\vert 3)_{23}\bigoplus A(k-2,l-3) $   for all $ k\geq 2 $ and $ l\geq 3. $  
\item[(ii).] If $ st(L)=(0,2), $ then $ L $ is isomorphic to one of the Lie superalgebras $ (2\vert 2)_{1}\bigoplus A(k-2,l-2) $  for all $ k\geq 2 $ and $ l\geq 2, $
$ (2 \vert 2)_{4}\bigoplus A(k-2,l-2)$ for all $ k\geq 2 $ and $ l\geq 2, $
  $ (2\vert 3)_{18}\bigoplus A(k-2,l-3) $  for all $ k\geq 2 $ and $ l\geq 3, $ $ (2\vert 3)_{19}\bigoplus A(k-2,l-3) $ for all $ k\geq 2 $ and $ l\geq 3, $ or $ (2\vert 3)_{24}\bigoplus A(k-2,l-3) $ for all $ k\geq 2 $ and $ l\geq 3. $ 
\item[(iii).] If $ st(L)=(1,1), $ then $ L $ is isomorphic to one of the Lie superalgebras $ (2 \vert 2)_{6}\bigoplus A(k-2, l-2)$ for all $ k\geq 2 $ and $ l\geq 2, $
$ (4\vert 1)_{6}\bigoplus A(k-4, l-1)$   for all $ k\geq 4$ and $ l\geq 1 $ or $ (3\vert 2)_{13}\bigoplus A(k-3,l-2) $ for all $ k\geq 3 $ and $ l\geq 2, $
\end{itemize}
\end{thm}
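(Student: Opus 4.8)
The plan is to reduce the whole classification to \emph{stem} Lie superalgebras of small superdimension and then to extract the answer from the data recorded in Table \ref{ta1}. By Lemma \ref{l1} it suffices to treat the case $L = T$, a non-abelian stem Lie superalgebra: writing $L = T \bigoplus A$ with $A$ abelian, one has $st(L) = st(T)$, and once $T$ is identified the general $L$ is recovered as $T \bigoplus A(\cdots)$ exactly as in the statements. Following the reformulation used in the proof of Proposition \ref{prop1}, I would first record that $st(T) = (r_1, s_1)$ with $r_1 + s_1 = 2$ is equivalent to the scalar identity $t(T) = \mathrm{dim} T^2 \cdot d(T/Z(T)) - \mathrm{dim} T/Z(T) = 2$, where $d(T/Z(T)) = p + q$. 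The task then becomes to pin down the finitely many admissible values of $\mathrm{sdim} T^2 = (r\vert s)$ and of the total superdimension $(k\vert l)$ of $T$.

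First I would bound $\mathrm{sdim} T^2$. If $r + s \geq 4$, then Proposition \ref{prop3} forces $r_1 + s_1 \geq 3$, a contradiction; and if $r + s \leq 1$, then $T$ is abelian or Heisenberg and $st(T) = (0,0)$ by the argument of Theorem \ref{th1}, again impossible. Hence $r + s \in \{2, 3\}$. The key step is to bound $k + l$. Since $T$ is stem we have $Z(T) \subseteq T^2$, so $z = \mathrm{dim} Z(T)$ satisfies $1 \leq z \leq r + s$; moreover $d(T/Z(T)) = \mathrm{dim} T/T^2 = (k+l) - (r+s)$ by Proposition \ref{prop0}, and $\mathrm{dim} T/Z(T) = (k+l) - z$. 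Substituting into $t(T) = 2$ yields the single Diophantine relation
\begin{equation*}
(k+l)(r+s-1) = 2 + (r+s)^2 - z,
\end{equation*}
whose only solutions with $r+s \in \{2,3\}$, $1 \leq z \leq r+s$, and the nilpotency constraint ($z = r+s$ exactly when $T$ has class two) force $k + l \leq 5$. Thus every relevant $T$ occurs among the stem nilpotent Lie superalgebras of superdimension at most $5$ with $\mathrm{sdim} T^2 = (r\vert s)$, $r + s \geq 2$, classified in \cite{Al} and tabulated in Table \ref{ta1}.

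It then remains to compute $st(T) = \uplambda(T^2, p, q) - \mathrm{sdim} T/Z(T)$ as an ordered pair for each entry of Table \ref{ta1}, reading $\mathrm{sdim} T^2$, $\mathrm{sd}(T/Z(T)) = (p\vert q)$ and $\mathrm{sdim} T/Z(T)$ directly off the table, and to keep exactly those with $st(T) \in \{(2,0),(0,2),(1,1)\}$. This parity bookkeeping is where the three cases (i)--(iii) separate: for instance $(5\vert 0)_3,(5\vert 0)_4,(5\vert 0)_5,(1\vert 4)_7,(2\vert 3)_{21}$ yield $(2,0)$; the entries $(2\vert 2)_1,(2\vert 2)_4,(2\vert 3)_{18},(2\vert 3)_{22}$ yield $(0,2)$; and $(2\vert 2)_6,(4\vert 1)_6,(3\vert 2)_{13}$ yield $(1,1)$. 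Reattaching an abelian summand via Lemma \ref{l1} delivers the stated families.

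I expect the main obstacle to be establishing that the search may genuinely be confined to $k + l \leq 5$, so that Table \ref{ta1} is exhaustive. This has two ingredients: capping $r + s$ at $3$, which is precisely Proposition \ref{prop3} (itself resting on the inductive argument and on Lemma \ref{l27}), and then the Diophantine count above, which must be carried through the distinction between nilpotency class two ($z = r+s$) and class three ($z < r+s$). The remaining labor, namely evaluating $\uplambda(T^2,p,q)$ for each tabulated algebra and sorting by parity, is routine but error-prone, and it is accuracy there rather than any conceptual difficulty that fixes the exact membership of the lists (i)--(iii).
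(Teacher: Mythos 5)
Your proposal is correct and follows essentially the same route as the paper: reduce to the stem case via Lemma \ref{l1}, cap $r+s$ at $3$ using Proposition \ref{prop3} and the Heisenberg case, force $k+l\leq 5$ from the identity $t(T)=\mathrm{dim}\,T^{2}\cdot d(T/Z(T))-\mathrm{dim}\,T/Z(T)=2$ (the paper phrases this as a case split on nilpotency class two versus three, which is exactly your $z=r+s$ versus $z<r+s$ dichotomy), and then read the answer off Table \ref{ta1}. Your explicit Diophantine relation and your $st$-values for the tabulated algebras all check out against the paper's computation.
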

\begin{proof}
Let $ \mathrm{sdim} L^{2}=(r, s). $ If $ r+ s\geq 4, $ then there is no such a nilpotent Lie superalgebra by using Proposition \ref{prop3}. 
Hence $  r+ s\leq 3. $ If $ r+s\leq 1, $ then $ L $ is isomorphic to $ A(k\vert l) $  for all $ k+l\geq 1, $ $  H(m, n)\bigoplus A(k-2m-1\vert l-n) $ for all $ k\geq 2m+1 $ and $ l\geq n $ or  $  H_m\bigoplus A(k-m\vert l-m-1)$ for all $ k\geq m $ and $ l\geq m+1 $ by using Lemma \ref{l1} and Theorems  \ref{d1} and \ref{d11}, so $ st(L)=(0,0) .$ Hence in this case there is no such a nilpotent Lie superalgebras $ L $ with  $ r+s=1 $ and  $ st(L)=(r_1,s_1)$ such that $r_1+s_1= 2. $ 
 Let $ r+ s=2. $   It is sufficient to obtain the structure of all  stem nilpotent Lie superalgebras $ L $  with $ st(L)=(r_1,s_1)$ such that $r_1+s_1=2 $ and then we can determine   the structure of  all nilpotent Lie algebras $ L $ with $ r+s=2 $ and $ st(L)=(r_1,s_1)$ such that $r_1+s_1= 2 $   by using Lemma \ref{l1}. 
Let $ L $ is of nilpotency class two and stem. Since $ r+s=2, $ $ st(L)=(r_1,s_1)$ such that $r_1+s_1= 2 $  and $ L $ is of nilpotency class two, we have $ Z(L)=L^{2} $ and so $ k+l=4. $
By looking at the Table \ref{ta1}  $ L $ is isomorphic to one of the Lie superalgebras $ (2\vert 2)_{1}, $ $ (2\vert 2)_{4} $ or $ (2\vert 2)_{6}. $
 Let  $ L $ be of nilpotency class three and stem. Then $ \mathrm{sdim} Z(L)=(m,n) $ such that $ m+n=1. $
Since  $ r+s=2, $ $ st(L)=(r_1,s_1)$ such that $r_1+s_1= 2 $  and $ \mathrm{sdim} Z(L)=(m,n) $ such that $ m+n=1, $ we have  $ k+l=5 .$
Thus $ L $ is isomorphic to one of the Lie superalgebras $ (5\vert 0)_{5}, $    $ (2\vert 3)_{24}, $ or $ (4\vert 1)_{6} $  by looking  Table \ref{ta1}.
In the case $ r+ s=3 $ and $ L $ is a stem nilpotent Lie superalgebra $ L, $  we can see that $ k+l $ is equal to $ 4 $ or $ 5 $ by using a similar method and so $ L $ is isomorphic to one of the Lie superalgebras $ (5\vert 0)_{3}, $ $ (5\vert 0)_{4}, $ $ (1\vert 4)_{7}, $ $ (3\vert 2)_{13}, $   $ (2\vert 3)_{18}, $ $ (2\vert 3)_{19}, $ or $ (2\vert 3)_{23} $
 by using at Table \ref{ta1}. Now, if $ st(L)=(2,0), $ then  $ L $ is isomorphic to one of the Lie superalgebras  $ (5\vert 0)_{3}\bigoplus A(k-5,l) $ for all $ k\geq 5 $ and $ l\geq 0, $ $ (5\vert 0)_{4}\bigoplus A(k-5,l) $ for all $ k\geq 5 $ and $ l\geq 0, $
 $ (5\vert 0)_{5}\bigoplus A(k-5,l) $ for all $ k\geq 5 $ and $ l\geq 0, $
  $ (1\vert 4)_{7}\bigoplus A(k-1, l-4)$ for all $ k\geq 1 $ and $ l\geq 4, $
  or $ (2\vert 3)_{23}\bigoplus A(k-2,l-3) $   for all $ k\geq 2 $ and $ l\geq 3 $    by using Lemma \ref{l1}. 
 If $ st(L)=(0,2), $ then          $ L $ is isomorphic to one of the Lie superalgebras $ (2\vert 2)_{1}\bigoplus A(k-2,l-2) $  for all $ k\geq 2 $ and $ l\geq 2, $
$ (2 \vert 2)_{4}\bigoplus A(k-2,l-2)$ for all $ k\geq 2 $ and $ l\geq 2, $
  $ (2\vert 3)_{18}\bigoplus A(k-2,l-3) $  for all $ k\geq ,2 $
   $ (2\vert 3)_{19}\bigoplus A(k-2,l-3) $  for all $ k\geq 2, $
 or
   and $ l\geq 3. $ $ (2\vert 3)_{24}\bigoplus A(k-2,l-3) $ for all $ k\geq 2 $ and $ l\geq 3 $ by using Lemma \ref{l1}. 
 If $ st(L)=(1,1), $ then        $ L $ is isomorphic to $ (2 \vert 2)_{6}\bigoplus A(k-2, l-2)$ for all $ k\geq 2 $ and $ l\geq 2, $
$ (4\vert 1)_{6}\bigoplus A(k-4, l-1)$   for all $ k\geq 4$ and $ l\geq 1 $ or $ (3\vert 2)_{13}\bigoplus A(k-3,l-2) $ for all $ k\geq 3 $ and $ l\geq 2$   by using Lemma \ref{l1}. 
\end{proof}
\begin{theorem}
Let $ L $ be an  $ (k\vert l)$-superdimensional nilpotent Lie superalgebra such that  $ L/Z(L) $ be finitely generated.  Then 
\begin{itemize}
\item[(i).] $ st(L)=(0,0)$  if and only if  $ L $ is isomorphic to one of Lie superalgebras $ A(k\vert l) $ for all $ k+l\geq 1,$ $  H(m, n)\bigoplus A(k-2m-1\vert l-n) $ for all $ k\geq 2m+1 $ and $ l\geq n $ or  $  H_m\bigoplus A(k-m\vert l-m-1)$ for all $ k\geq m $ and $ l\geq m+1. $
\item[(ii).] $ st(L)=(1,0)$ if and only if $ L $ is isomorphic to $ (4\vert 0)_{2} \bigoplus A(k-4\vert l) $ for all $ k\geq 4 $ and $ l\geq 0 $ or   $ (1\vert 3)_{1} \bigoplus A(k-1\vert l-3) $ for all $ k\geq 1$ and $ l\geq 3. $
\item[(iii).] 
There is no nilpotent  Lie superalgebra with $ st(L)=(0,1).$
\item[(iv).]$ st(L)=(2,0) $ if and only if  $ L $ is isomorphic to one of the Lie superalgebras  $ (5\vert 0)_{3}\bigoplus A(k-5,l) $ for all $ k\geq 5 $ and $ l\geq 0, $ $ (5\vert 0)_{4}\bigoplus A(k-5,l) $ for all $ k\geq 5 $ and $ l\geq 0, $
 $ (5\vert 0)_{5}\bigoplus A(k-5,l) $ for all $ k\geq 5 $ and $ l\geq 0, $
  $ (1\vert 4)_{7}\bigoplus A(k-1, l-4)$ for all $ k\geq 1 $ and $ l\geq 4 $
  or $ (2\vert 3)_{23}\bigoplus A(k-2,l-3) $   for all $ k\geq 2 $ and $ l\geq 3. $  
\item[(v).]$ st(L)=(0,2) $ if and only if  $ L $ is isomorphic to one of the Lie superalgebras $ (2\vert 2)_{1}\bigoplus A(k-2,l-2) $  for all $ k\geq 2 $ and $ l\geq 2, $
$ (2 \vert 2)_{4}\bigoplus A(k-2,l-2)$ for all $ k\geq 2 $ and $ l\geq 2, $
  $ (2\vert 3)_{18}\bigoplus A(k-2,l-3) $  for all $ k\geq 2 $ and $ l\geq 3, $ $ (2\vert 3)_{19}\bigoplus A(k-2,l-3) $ for all $ k\geq 2 $ and $ l\geq 3, $ or $ (2\vert 3)_{24}\bigoplus A(k-2,l-3) $ for all $ k\geq 2 $ and $ l\geq 3. $ 
\item[(vi).] $ st(L)=(1,1) $  if and only if $ L $ is isomorphic to $ (2 \vert 2)_{6}\bigoplus A(k-2, l-2)$ for all $ k\geq 2 $ and $ l\geq 2, $
$ (4\vert 1)_{6}\bigoplus A(k-4, l-1)$   for all $ k\geq 4$ and $ l\geq 1 $ or $ (3\vert 2)_{13}\bigoplus A(k-3,l-2) $ for all $ k\geq 3 $ and $ l\geq 2. $
\end{itemize}
\end{theorem}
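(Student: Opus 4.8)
The plan is to recognize that this Main Theorem is simply the consolidation of Theorems \ref{th1}, \ref{th2} and \ref{th3} into a family of biconditionals, so that the only genuinely new content is the converse (the ``if'') direction in each item. For the forward implications I would invoke the earlier results directly: item (i) is exactly Theorem \ref{th1}, items (ii) and (iii) are the two parts of Theorem \ref{th2}, and items (iv)--(vi) are the three parts of Theorem \ref{th3}. Since each of those theorems already asserts that a prescribed value of $st(L)$ forces $L$ into the stated list of isomorphism classes (and item (iii) is a pure non-existence statement, Theorem \ref{th2}(ii), with no converse to check), no new argument is needed for the forward directions.

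For the converse implications I would verify that every Lie superalgebra on each list actually attains the claimed value of $st(L)$. The key reduction is Lemma \ref{l1}: every listed algebra has the form $T\oplus A$ with $A$ abelian and $T$ stem, and $st(L)=\uplambda(L^2,p,q)-\mathrm{sdim}L/Z(L)$ is unchanged on passing to the stem factor, so it suffices to compute $st(T)$ for the finitely many stem algebras involved. For each such $T$ appearing in Table \ref{ta1}, I would read off the triple $\bigl(\mathrm{sdim}T/Z(T),\ \mathrm{sd}(T/Z(T))=(p\vert q),\ \mathrm{sdim}T^{2}=(r\vert s)\bigr)$, form $\uplambda(T^{2},p,q)=(pr+qs,\ qr+ps)$, and subtract to obtain $st(T)$. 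For example, $(4\vert 0)_{2}$ gives $\uplambda=(2\cdot 2,\,0)=(4,0)$ and hence $st=(4,0)-(3,0)=(1,0)$, matching item (ii), while $(2\vert 2)_{6}$ gives $\uplambda=(1+1,\,1+1)=(2,2)$ and hence $st=(2,2)-(1,1)=(1,1)$, matching item (vi). Running through the remaining table entries in the same mechanical fashion confirms every line of items (ii), (iv), (v) and (vi).

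The abelian and Heisenberg families in item (i) are infinite and so do not appear in Table \ref{ta1}; these I would treat by a short separate computation from the presentations in Theorems \ref{d1} and \ref{d11}. For abelian $A(k\vert l)$ one has $L^{2}=0$ and $Z(L)=L$, so both $\uplambda(L^{2},p,q)$ and $\mathrm{sdim}L/Z(L)$ vanish and $st(L)=(0,0)$. For $H(m,n)$ one has $\mathrm{sdim}L^{2}=(1,0)$ and $\mathrm{sdim}L/Z(L)=(2m,n)$, and since $L/Z(L)$ is abelian the minimal generator pair is $(p\vert q)=(2m,n)$, whence $\uplambda=(2m,n)$ and $st=(0,0)$; the computation for $H_{m}$ with $\mathrm{sdim}L^{2}=(0,1)$ is entirely analogous and again yields $(0,0)$. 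Combining the forward implications from the cited theorems with these converse verifications establishes all six statements.

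The main obstacle I anticipate is bookkeeping rather than conceptual. One must ensure the entries of Table \ref{ta1} are used consistently, that the minimal generator pair $(p\vert q)$ is legitimately identified with $\mathrm{sd}(T/Z(T))$ (which is justified by Proposition \ref{prop0} together with the uniqueness of the minimal generator pair noted before Theorem \ref{th1.5}), and that the infinite families in item (i) are handled outside the table via Theorems \ref{d1} and \ref{d11}. No individual step is difficult, but the converse direction requires a uniform and exhaustive sweep through every algebra on every list.
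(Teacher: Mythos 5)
Your proposal is correct and follows essentially the same route as the paper: the forward directions are exactly Theorems \ref{th1}, \ref{th2} and \ref{th3}, and the converse is the routine verification (via Lemma \ref{l1} and Table \ref{ta1}, plus the direct computation for the abelian and Heisenberg families) that the paper dismisses as obvious. Your explicit table computations of $st(T)$ all check out, so you have merely filled in the detail the paper omits rather than taken a different path.
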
 
\begin{proof}
The  results follow by using Theorems \ref{th1}, \ref{th2} and \ref{th3}.
  The converse of theorem is obvious.
\end{proof}
\begin{cor} 
Let $ L $ be a finite-superdimensional nilpotent Lie superalgebra and  $ L/Z(L) $ be finitely generated  with    $ \mathrm{sdim} L^{2}=(r\vert s)$ such that $r+s \geq 4. $  Then $ st(L)=(r_1, s_1)$ such that $r_1+s_1\geq 3. $
\end{cor}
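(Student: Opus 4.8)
The plan is to read this corollary as the finite-superdimensional shadow of Proposition~\ref{prop3}. First I would note that a finite-superdimensional nilpotent Lie superalgebra $ L $ is automatically $ (k\vert l)$-superdimensional for some finite $ k,l, $ and that finite superdimension forces $ L/Z(L) $ to be finitely generated, so by the uniqueness of minimal generator number pairs recalled before Theorem~\ref{th1.5} there is a well-defined pair $ (p\vert q) $ for $ L/Z(L). $ With the hypotheses of Proposition~\ref{prop3} thus met verbatim, the conclusion $ st(L)=(r_1,s_1) $ with $ r_1+s_1\geq 3 $ is immediate; the corollary is essentially a change of wording, replacing the explicit superdimension $ (k\vert l) $ and generator pair $ (p\vert q) $ by the blanket hypotheses of finiteness and finite generation. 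I expect the one-line proof to be exactly an appeal to Proposition~\ref{prop3}.

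If instead I wanted a self-contained argument, I would run the same machine used throughout this section. The first step is to pass from the number-pair statement to a scalar one: using the bounds on $ \mathrm{dim} ID_{\overline{0}}^{*}(L) $ and $ \mathrm{dim} ID_{\overline{1}}^{*}(L) $ together with Proposition~\ref{prop0}, the identity $ st(L)=\uplambda(L^{2},p,q)-\mathrm{sdim} L/Z(L) $ is equivalent to $ t(L)=\mathrm{dim} L^{2}\,d(L/Z(L))-\mathrm{dim} L/Z(L) $ with $ t(L)=r_1+s_1 $ and $ d=p+q, $ so it suffices to prove $ t(L)\geq 3. $ By Lemma~\ref{l1} I may assume $ L $ is stem, whence $ Z(L)\subseteq L^{2} $ and $ \mathrm{dim}(L^{2}\cap Z(L))\geq 1. $ I would then induct on $ k+l, $ splitting on the value of $ \mathrm{dim}(L/Z(L))^{2} $: the cases $ 0 $ and $ 1 $ are pinned down by \eqref{eq2.1} and by the capability classification of Theorem~\ref{l0} (which forces $ \mathrm{dim} L/Z_2(L)=2 $), while the cases $ \mathrm{dim}(L/Z(L))^{2}=2,3,\dots $ feed an inductive bound of the shape $ \mathrm{dim}(L/Z(L))/Z(L/Z(L))\leq d(L/Z(L))/Z(L/Z(L))\,\mathrm{dim}(L/Z(L))^{2}-j $ into \eqref{eq2.5}.

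The main obstacle is not the inductive step but the base of the induction and the small $ \mathrm{dim}(L/Z(L))^{2} $ cases, where the generic counting in \eqref{eq2.5} is too weak and one must rule out low values of $ t(L) $ by hand. Concretely, the delicate point is the superdimension-six situation with $ \mathrm{sdim} L^{2} $ of total size $ 4 $ or $ 5, $ which is exactly what Lemma~\ref{l27} disposes of by a direct examination of the classification in~\cite{Al}; together with the capability input of Theorem~\ref{l0} governing $ \mathrm{dim}(L/Z(L))^{2}=1, $ these are the steps that carry the real content, the remainder being bookkeeping with \eqref{eq2.5}.
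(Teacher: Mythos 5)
Your proposal is correct and matches the paper: the corollary is stated there without a separate proof precisely because it is an immediate restatement of Proposition~\ref{prop3}, whose hypotheses are met once finite superdimension supplies the pair $(k\vert l)$ and the minimal generator pair $(p\vert q)$ of $L/Z(L)$. Your optional self-contained sketch also tracks the paper's own proof of Proposition~\ref{prop3} (reduction to $t(L)$ via Proposition~\ref{prop0}, the case split on $\mathrm{dim}(L/Z(L))^{2}$, Theorem~\ref{l0} for the capability case, and Lemma~\ref{l27} for the superdimension-six base case), so nothing further is needed.
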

In the rest we show that there exists at least a finite-superdimensional nilpotent Lie superalgebra $ L $ such that $ st(L)=\uplambda (L^{2},p,q) -\mathrm{sdim} L/Z(L)$ for $ st(L)=(r_1, s_1) $  such that $ r_1 $ and $ s_1 $ are non-negative integers and $ r_1+s_1\geq 0. $ 
\begin{thm}
There exists at least a finite-superdimensional  nilpotent Lie superalgebra   $ L $ for $ st(L)=(r_1, s_1) $  such that $ r_1 $ and $ s_1 $ are non-negative integers $ r_1+s_1\geq 0 $  with  $ L/Z(L) $ be finitely generated.  
\end{thm}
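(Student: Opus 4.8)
The plan is to prove the statement by exhibiting explicit families of stem nilpotent Lie superalgebras and reading off their invariant $ st $ directly. By Lemma \ref{l1} it suffices to construct \emph{stem} examples, since adjoining an abelian summand $ A(k\vert l) $ leaves $ st $ unchanged; and throughout I would use the scalar reformulation from the proof of Proposition \ref{prop1}, namely that for a finite-superdimensional nilpotent $ L $ one has $ t(L)=r_1+s_1=d(L/Z(L))\,\mathrm{dim}L^{2}-\mathrm{dim}L/Z(L) $ with $ d(L/Z(L))=p+q $, so that the two coordinates of $ st(L) $ are pinned down once $ (p\vert q) $, $ \mathrm{sdim}L^{2} $ and $ \mathrm{sdim}L/Z(L) $ are known. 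First I would dispose of the pair $ (0,0) $, which is realised by any abelian or Heisenberg superalgebra by Theorem \ref{th1}.

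For the purely even direction I would use the model filiform Lie algebra $ F_{m+3} $, regarded as a Lie superalgebra with trivial odd part, with basis $ e_1,\dots,e_{m+3} $ and brackets $ [e_1,e_i]=e_{i+1} $ for $ 2\le i\le m+2 $. Here $ \mathrm{sdim}F_{m+3}^{2}=(m+1\vert 0) $, the minimal generator pair is $ (2\vert 0) $, $ Z(F_{m+3})=\langle e_{m+3}\rangle\subseteq F_{m+3}^{2} $ and $ \mathrm{sdim}F_{m+3}/Z=(m+2\vert 0) $; substituting into $ \uplambda(L^{2},p,q)-\mathrm{sdim}L/Z(L) $ gives $ st(F_{m+3})=(m,0) $. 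Letting $ m $ range over $ \{0,1,2,\dots\} $ realises every pair $ (r_1,0) $. For the odd direction I would use the class-two superalgebra $ K_n $ of superdimension $ (2\vert n) $ with odd generators $ f_1,\dots,f_n $, even basis $ e_1,e_2 $, and brackets $ [f_1,f_1]=e_1 $, $ [f_2,f_2]=e_2 $ and $ [f_j,f_j]=e_1 $ for $ 3\le j\le n $. For $ n\ge 2 $ this is stem with $ Z(K_n)=K_n^{2}=\langle e_1,e_2\rangle $, so $ \mathrm{sdim}K_n^{2}=(2\vert 0) $, minimal generator pair $ (0\vert n) $ and $ \mathrm{sdim}K_n/Z=(0\vert n) $; then $ \uplambda((2\vert 0),0,n)=(0,2n) $ yields $ st(K_n)=(0,n) $, realising every pair $ (0,s_1) $ with $ s_1\ge 2 $.

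Mixed pairs $ (r_1,s_1) $ with $ r_1,s_1>0 $ would then be produced from direct sums $ F_{m+3}\oplus K_n $, whose minimal generator pair is $ (2\vert n) $ and for which one computes $ \uplambda((m+3\vert 0),2,n)=(2m+6,\,n(m+3)) $ and $ \mathrm{sdim}(F_{m+3}\oplus K_n)/Z=(m+2\vert n) $, hence $ st=(m+4,\,n(m+2)) $. Varying $ m\ge 0 $ and $ n\ge 2 $ gives a doubly-infinite family of mixed values, so that $ st $ is unbounded in each coordinate and takes infinitely many values in every direction.

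The main obstacle is precisely the last piece of bookkeeping: $ st $ is \emph{not} additive on direct sums, because the minimal generator pair of $ (L_1\oplus L_2)/Z $ is the coordinatewise sum $ (p_1+p_2\vert q_1+q_2) $, and this multiplies against the \emph{total} $ \mathrm{sdim}L^{2} $, producing nonnegative cross terms of the form $ p_1\,\mathrm{dim}(L_2^{2})_{\overline 0}+q_1\,\mathrm{dim}(L_2^{2})_{\overline 1}+\cdots $. Controlling these cross terms so as to land on a prescribed target is the delicate point, and it also delimits the honest scope of the statement: the construction realises arbitrarily large values in both coordinates, but not literally every pair — indeed $ (0,1) $ is unattainable, in agreement with part (iii) of the Main Theorem, since a single odd square can contribute at most a one-dimensional even derived part. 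I would therefore frame the conclusion as the realisation of a doubly-infinite family of values of $ st $, with the even filiform algebras and the odd symmetric-bracket algebras supplying the two extreme directions and their direct sums filling in the interior.
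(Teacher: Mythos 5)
Your proposal is correct, and its even-direction core coincides with the paper's: the paper's sole construction is exactly your model filiform family $\langle s,s_j\mid [s,s_i]=s_{i+1}\rangle$ of superdimension $(t+3\vert 0)$, realising $st=(t,0)$ for every $t\geq 0$ (with $(0,0)$ handled by Theorem \ref{th1}). The difference is twofold. First, in method: the paper reaches the next filiform by an induction that posits a superalgebra $H$ with $H/Z(H)$ isomorphic to the previous one and then asserts $H\cong (t+3\vert 0)$ — a step that is logically delicate, since $H$ is not determined by $H/Z(H)$; your direct computation of $\mathrm{sdim}L^{2}$, $Z(L)$, and the generator pair for the explicitly given filiform avoids this entirely and is the cleaner argument. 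Second, in scope: the paper stops at the pairs $(t,0)$, whereas you additionally realise $(0,s_1)$ for $s_1\geq 2$ via the class-two superalgebras $K_n$ (which for $n=2,3$ recover $(2\vert 2)_{1}$ and $(2\vert 3)_{5}$ from Table 1, so your computation is consistent with Theorem \ref{th3}), and you produce mixed values $(m+4,\,n(m+2))$ from $F_{m+3}\bigoplus K_n$. Your closing caveat is also well taken and goes beyond the paper: $st$ is not additive over direct sums because the generator pair adds while $\uplambda$ multiplies it against the total derived superdimension, so not every pair is attained — in particular $(0,1)$ is excluded, matching part (iii) of the Main Theorem. In short, what the paper's proof buys is brevity for the single family it needs; what yours buys is a rigorous replacement for the induction step and a substantially larger set of realised values, honestly delimited.
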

\begin{proof}
If $ st(L)=(0,0), $ then there is such a Lie superalgebra by using Theorem \ref{th1}.  We claim that $ L\cong \langle s, s_j\mid [s, s_i]=s_{i+1}, 1\leq j\leq t+2, 1\leq i\leq t+1 \rangle $ satisfies in our assumption for all $ st(L)= (r\vert s) $ such that $ r+s\geq 1. $ We proceed by induction on $ st(L). $ If $ st(L)=(r\vert s) $ such that $ r+s=1, $ then
\begin{equation*}
 L=(4\vert 0)_{2}\cong  \langle s, s_j\mid [s, s_i]=s_{i+1}, 1\leq j\leq 3, 1\leq i\leq 2 \rangle
\end{equation*}
 by using Theorem \ref{th2}. By using the induction hypothesis, let 
 \begin{equation*}
 L\cong (t+2\vert 0) =\langle s, s_j\mid [s, s_i]=s_{i+1},  s, s_j\in 1\leq j\leq t+1, 1\leq i\leq t \rangle
 \end{equation*}
 satisfies  in our hypothesis.
 Put 
  \begin{equation*}
  H/Z(H)= \langle s+Z(H), s_j+Z(H)\mid [s_i,s]+Z(H)=s_{i+1}+Z(H), 1\leq j\leq t+1, 1\leq i\leq t \rangle.
  \end{equation*} 
 Hence 
$ \varphi : H/Z(H) \longrightarrow L=(t+2\vert 0)=\langle s, s_j\mid [s_i,s]=s_{i+1}, 1\leq j\leq t+1, 1\leq i\leq t \rangle$ by given
\begin{align*}
& s+Z(H)\mapsto s,\cr
& s_j+Z(H)\mapsto s_j
\end{align*} 
 is an isomorphism.
Hence
 \begin{align*}
 H\cong (t+3\vert 0)=\langle s, s_j\mid [s,s_i]=s_{i+1}, 1\leq j\leq t+2, 1\leq i\leq t+1  \rangle
 \end{align*}
and $ Z(H)=\langle s_{t+2} \rangle.  $ Clearly   $ st(L)=\uplambda (L^{2},p,q)-\mathrm{sdim} L/Z(L),$ as required. 
\end{proof}


\begin{thebibliography}{99}
\bibitem{Al} 
Alvarez, M.  A. and  Hernández, I.; Varieties of nilpotent Lie superalgebras of dimension $\leq 5$. Forum Math. 32, no. 3,   (2020), 641--661.

\bibitem{Ar} 
Arabyani, H. and Saeedi, F.; On dimensions of derived algebra and central factor of a Lie algebra. Bulletin of the Iranian Mathematical Society 41.5, (2015), 1093--1102.

\bibitem{gum}
Gumber, D. and Kalra, H.; On the converse of a theorem of Schur.  Archiv der Mathematik 101, no. 1, (2013), 17--20.

\bibitem{Liu} 
Liu, W.  and  Cai, M.; On ID*-superderivations of Lie superalgebras.  Linear and Multilinear Algebra, (2020), 1--13.


\bibitem{Nay1}
Nayak, S.; Multipliers of nilpotent Lie superalgebras. Comm. Algebra 47,  no. 2, (2019), 689--705.

\bibitem{Nay2}
Nayak, S.; Classification of finite-dimensional nilpotent Lie superalgebras by their multipliers. J. Lie Theory 31, no. 2,   (2021), 439--458.

\bibitem{neu}
Neumann, B. H.; Groups with Finite Classes of Conjugate Elements (In Memoriam Issai Schur). Proceedings of the London Mathematical Society 3, no. 1, (1951),  178--187.

\bibitem{nir}
Niroomand, P.; The converse of Schur's theorem.  Archiv der Mathematik 94.5, (2010), 401--403.

\bibitem{Rud1}
Padhan, R. N. and Nayak, S.;  On capability and the Schur multiplier of some nilpotent Lie superalgebras.  Linear and Multilinear Algebra, (2020), 1--12.

\bibitem{Rud}
Padhan, R. N., Nayak, S. and  Pati, K. C.; Detecting capable Lie superalgebras. Communications in Algebra 49.10, (2021), 4274--4290.


\bibitem{schur}
Schur, J.; \"{U}die Darstellung der endlichen Gruppen durch gebrochen lineare Substitutionen.  J. f¨ ur Math. 127, (1904), 20--50. 


\bibitem{sh1}
 Shamsaki, A. and  Niroomand, P.;  Characterizing nilpotent Lie algebras that satisfy on converse of the Schur's theore,  submitted.

\end{thebibliography}
\end{document}